\newtheorem{thm}{Theorem}[section]
\newtheorem{cor}[thm]{Corollary}
\newtheorem{lem}[thm]{Lemma}
\newtheorem{conj}[thm]{Conjecture}
\newtheorem{prop}[thm]{Proposition}
\theoremstyle{definition}
\newtheorem{defn}[thm]{Definition}
\theoremstyle{remark}
\newtheorem{rem}[thm]{Remark}
\numberwithin{equation}{section}
\newcommand{\Ff}{\mathbb{F}}
\newcommand{\Q}{\mathbb{Q}}
\newcommand{\R}{\mathbb{R}}
\newcommand{\C}{\mathbb{C}}
\newcommand{\Gal}{\textnormal{Gal}}
\newcommand{\Z}{\mathbb{Z}}
\newcommand{\F}{\mathcal{F}}
\newcommand{\ord}{\textnormal{ord}}
\newcommand{\supp}{\textnormal{supp}}
\begin{document}

\title[One Level Density for Cubic Galois Number Fields]{One Level Density for Cubic Galois Number Fields}%
\author{Patrick Meisner}%
\address{Tel Aviv University}%
\email{meisner@mail.tau.ac.il}%


\begin{abstract}

Katz and Sarnak predicted that the one level density of the zeros of a family of $L$-functions would fall into one of five categories. In this paper, we show that the one level density for $L$-functions attached to cubic Galois number fields falls into the category associated with unitary matrices.

\end{abstract}
\maketitle

\section{Introduction}\label{intro}

Given an $L$-function, the one-level density is the function
$$\mathscr{D}(L,f):=\sum_{\gamma} f\left(\frac{\gamma\log X}{2\pi}\right)$$
where $f$ is an even Schwartz test function and the sum runs over all non-trivial zeros of the $L$-function $\rho = 1/2+i\gamma$. The Generalized Riemann Hypothesis tells us that $\gamma$ will always be real. However, we do not suppose this.

\begin{rem}

The $\log$ factor in the definition of the one-level density is to ensure our zeros have mean spacing $1$.

\end{rem}

One may think of $f$ as a smooth approximation to the indicator function of an interval centered at $0$. Therefore the one-level density can be thought of as a measure of how many zeros are close to the real line, so called low-lying zeros.

For a suitably nice family $\mathcal{F}$ of $L$-functions and Schwartz function $f$ Katz and Sarnak \cite{KS} predicted that
$$\left\langle \mathscr{D}(L,f) \right\rangle_{\mathcal{F}} := \lim_{X\to\infty}\frac{1}{|\mathcal{F}(X)|} \sum_{L\in \mathcal{F}(X)}\mathscr{D}(L,f) = \int_{-\infty}^{\infty} f(t) W(G)(t) dt, $$
where the $\mathcal{F}(X)$ are finite increasing subsets of $\mathcal{F}$ and $W(G)(t)$ is the one-level density scaling of eigenvalues near 1 in a group of random matrices (indicated by $G$). This group, $G$, is called the symmetry type of the family $\mathcal{F}$.

Moreover, Katz and Sarnak predicted that $W(G)(t)$ would fall into one of these five categories
$$W(G)(t) = \begin{cases} 1 & G=U \\ 1 - \frac{\sin(2\pi t)}{2\pi t} & G=Sp \\ 1 + \frac{1}{2}\delta_0(t) & G=O \\ 1 + \frac{\sin(2\pi t)}{2\pi t} & G=SO(even) \\ 1 + \delta_0(t)-\frac{\sin(2\pi t)}{2\pi t} & G = SO(odd)  \end{cases}$$
where $\delta_0$ is the Dirac distribution and U, Sp, O, SO(even), SO(odd) are the groups of unitary, symplectic, orthogonal, even orthogonal and odd orthogonal matrices, respectively.

\subsection{Number Fields}

In this section, we will discuss known results for $L$-function attached to number fields.

%
%
%

For any number field, $K$, define $\zeta_K(s) = \sum_{\mathfrak{a}} N\mathfrak{a}^{-s}$. Denote $\zeta_{\mathbb{Q}}(s):=\zeta(s)$. Then the $L$-function associated to the field $K$ would be
$$L_K(s) = \frac{\zeta_K(s)}{\zeta(s)}.$$
Further, if we denote $D_K$ as the discriminant of $K$, then the one-level density will be
\begin{align} \label{numfieldOLD}
\mathscr{D}(K,f) = \sum_{\gamma} f\left(\frac{\gamma \log D_K}{2\pi}\right),
\end{align}
i.e. set $X=D_K$. Then Katz and Sarnak \cite{KS2} proved the following.
\begin{thm}\label{KSthm}
Let $\F(X)$ be the family of number fields of the form $\Q(\sqrt{8d})$ with $X\leq d \leq 2X$ and $d$ square-free. Then assuming GRH, if $\supp(\hat{f})\subset(-2,2)$, then
$$\lim_{X\to\infty} \frac{1}{|\F(X)|} \sum_{K\in \F(X)} \mathscr{D}(K,f) = \int_{\infty}^{\infty} f(t) W(Sp)(t) dt.$$
\end{thm}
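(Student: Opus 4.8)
The plan is to follow the standard Katz--Sarnak strategy: use the explicit formula to convert the sum over zeros into a sum over primes weighted by the quadratic character attached to $K$, then average over the family, extracting the main term from the ``diagonal'' contribution and controlling the rest by character-sum estimates --- which, because these $L$-functions are self-dual with root number $+1$, display a symplectic-type reflection that is responsible both for the answer $W(Sp)$ and for the range $(-2,2)$.

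First I would reduce to a quadratic Dirichlet $L$-function. For $K=\Q(\sqrt{8d})$ one has $\zeta_K(s)=\zeta(s)L(s,\chi_{8d})$, where $\chi_{8d}$ is the real primitive character cut out by $K$, of conductor $D_K\asymp X$; hence $L_K(s)=L(s,\chi_{8d})$ is entire and GRH applies to it. Writing $L=\log D_K$ and applying Weil's explicit formula to $L(s,\chi_{8d})$ with test function $f$ and scaling $X=D_K$, one obtains
\begin{equation*}
\mathscr{D}(K,f)=\hat f(0)-\frac{2}{L}\sum_{p}\sum_{k\ge 1}\frac{\log p}{p^{k/2}}\,\chi_{8d}(p^{k})\,\hat f\!\left(\frac{k\log p}{L}\right)+O\!\left(\frac{1}{L}\right),
\end{equation*}
the conductor and archimedean (digamma) contributions being absorbed into $\hat f(0)+O(1/L)$ uniformly in $d$, since $f$ is Schwartz and $\supp(\hat f)$ is compact.

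Next I would average over squarefree $d\in[X,2X]$ and divide by $|\F(X)|\sim X/\zeta(2)$, splitting the prime sum according to whether $n=p^{k}$ is a perfect square. When $n=m^{2}$ we have $\chi_{8d}(m^{2})=1$ if $\gcd(m,2d)=1$ and $0$ otherwise, so its average over squarefree $d$ tends to an explicit constant close to $1$; the dominant contribution is $k=2$, and by Mertens' theorem and partial summation it equals $-\int_{0}^{2}\hat f(u)\,du+o(1)$ (the ramified primes $p\mid d$ and the terms $k\ge4$ contributing only $O(1/L)$). When $n$ is not a perfect square one must estimate $\frac{1}{X}\sum_{X\le d\le 2X}^{\flat}\chi_{8d}(n)$: after inserting a smooth weight, removing the squarefree condition via $\mu^{2}(d)=\sum_{e^{2}\mid d}\mu(e)$, and applying Poisson summation in $d$ modulo the conductor of the character $d\mapsto\chi_{8d}(n)$, one gets a dual sum whose zero frequency vanishes. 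The dual terms at square frequencies are coherent; summed back against $\Lambda(n)n^{-1/2}\hat f(\log n/L)$ they produce a \emph{secondary main term} supported on $1\le\log n/L\le2$, and because the completed $L$-functions $L(s,\chi_{8d})$ are self-dual with root number $+1$ this is exactly the reflection of the $k=2$ diagonal term --- a computation gives $+\int_{1}^{2}\hat f(u)\,du+o(1)$. The remaining dual terms (non-square frequencies) are genuine error, and GRH for the relevant Dirichlet $L$-functions shows they are $o(1)$ precisely when $\supp(\hat f)\subset(-2,2)$; this is the threshold at which the error just survives the average.

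Assembling the pieces,
\begin{equation*}
\left\langle\mathscr{D}(K,f)\right\rangle_{\F}=\hat f(0)-\int_{0}^{2}\hat f(u)\,du+\int_{1}^{2}\hat f(u)\,du=\hat f(0)-\int_{0}^{1}\hat f(u)\,du=\hat f(0)-\tfrac12\int_{-1}^{1}\hat f(u)\,du,
\end{equation*}
using that $\hat f$ is even. On the other hand, $\frac{\sin(2\pi t)}{2\pi t}$ is half the Fourier transform of the indicator function of $[-1,1]$, so by Parseval $\int f(t)W(Sp)(t)\,dt=\int f(t)\,dt-\int f(t)\frac{\sin(2\pi t)}{2\pi t}\,dt=\hat f(0)-\tfrac12\int_{-1}^{1}\hat f(u)\,du$, which matches. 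I expect the main obstacle --- and the reason this method stops at $(-2,2)$ --- to be the off-diagonal analysis: simultaneously extracting the secondary main term and bounding the Poisson dual error uniformly over all prime powers $n$ up to $X^{2}$. The remaining ingredients (squarefree sieving, ramified primes, the $2$-adic bookkeeping relating $\Q(\sqrt{8d})$ to $\chi_{8d}$, and the uniformity of the archimedean term) are routine.
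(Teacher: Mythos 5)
This theorem is not proved in the paper: it is quoted, with attribution, from Katz and Sarnak \cite{KS2} purely as background motivation for the cubic case. There is therefore no internal argument to compare your attempt against, and your proposal should be judged as a free-standing reconstruction of the Katz--Sarnak argument rather than as a match or mismatch with the present text.

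On its own terms, your outline follows the standard route (\"Ozl\"uk--Snyder / Katz--Sarnak / Rubinstein): identify $L_K=L(s,\chi_{8d})$, apply the explicit formula, average over squarefree $d\in[X,2X]$, take the prime-square ``diagonal'' $n=p^2$, and treat the nonsquare $n$ via M\"obius removal of the squarefree condition and Poisson summation in $d$ modulo the conductor of $d\mapsto\chi_{8d}(n)$, with GRH controlling the residual. The bookkeeping is internally consistent: the diagonal gives $-\int_0^2\hat f$, the claimed secondary main term from coherent dual frequencies gives $+\int_1^2\hat f$, and $\hat f(0)-\int_0^2\hat f+\int_1^2\hat f=\hat f(0)-\tfrac12\int_{-1}^{1}\hat f=\int f\,W(Sp)$, exactly as required by the symplectic density. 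One caution: you describe the extraction of the $+\int_1^2\hat f$ term and the bounding of the remaining dual frequencies as though they are a single clean step; in practice this is the entire technical content of the result beyond the elementary range $\supp(\hat f)\subset(-1,1)$, and ``GRH for the relevant Dirichlet $L$-functions shows they are $o(1)$'' compresses a delicate double-sum interchange with nontrivial cancellation over $n$, not just a pointwise GRH bound. As a sketch it is accurate in shape and in its final constants, but it should not leave the reader with the impression that the $(-2,2)$ threshold falls out of a Polya--Vinogradov or single-character GRH estimate; that was precisely what I could not verify from the level of detail given.
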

Therefore, we see that the symmetry type for quadratic extensions is symplectic.

Further, in his thesis \cite{Y}, Yang considered the family of cubic non-Galois number fields.

\begin{thm}\label{Yangthm}

Let $N_3(X)$ denote the set of cubic fields of discriminant between $X$ and $2X$ and whose Galois closure is $S_3$. Then if $\supp(\hat{f})\subset (-1/50,1/50)$,
$$\lim_{X\to\infty} \frac{1}{|N_3(X)|} \sum_{K\in N_3(X)} \mathscr{D}(K,f) = \int_{\infty}^{\infty} f(t) W(Sp)(t) dt.$$

\end{thm}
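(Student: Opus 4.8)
The plan is to run the explicit formula for $L_K(s)=\zeta_K(s)/\zeta(s)$ against an effective form of the Davenport--Heilbronn theorem with a local condition at one prime. For $K\in N_3(X)$, write $M$ for its Galois closure, so $\Gal(M/\Q)\cong S_3$ and $\zeta_K(s)=\zeta(s)L(s,\rho)$ with $\rho$ the $2$-dimensional irreducible representation of $S_3$; thus $L_K(s)=L(s,\rho)$, whose conductor is $|D_K|$ by the conductor--discriminant formula --- so $X=D_K$ in \eqref{numfieldOLD} is the analytic conductor --- and which, $\rho$ being monomial (induced from a cubic character of the quadratic resolvent $F=\Q(\sqrt{D_K})$), is a Hecke $L$-function over $F$, in particular entire and self-dual, so the explicit formula is available. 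With $a_K(p^k):=\Tr\bigl(\rho(\mathrm{Frob}_p)^k\bigr)$, a look at the local Euler factors shows that at unramified $p$ one has $a_K(p)\in\{2,0,-1\}$ and $a_K(p^2)\in\{3,1,0\}$ according as $p$ is totally split, partially split, or inert in $K$, while $a_K(p)\in\{0,1\}$ at the ramified primes (of which $K$ has $O(\log X)$). Using $\log|D_K|=\log X+O(1)$ and $|a_K(p^k)|\le 2$ to bound the contribution of prime powers $p^k$ with $k\ge 3$ and of the ramified primes by $o(1)$, the explicit formula yields
\begin{align*}
\mathscr D(K,f)=\hat f(0)&-\frac{2}{\log X}\sum_{p\nmid D_K}\frac{a_K(p)\log p}{\sqrt p}\,\hat f\!\left(\frac{\log p}{\log X}\right)\\
&-\frac{2}{\log X}\sum_{p\nmid D_K}\frac{a_K(p^2)\log p}{p}\,\hat f\!\left(\frac{2\log p}{\log X}\right)+o(1).
\end{align*}

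Averaging over $N_3(X)$, everything reduces to the means of $a_K(p)$ and $a_K(p^2)$ over $K\in N_3(X)$ with $p\nmid D_K$, for $p\le X^{1/50}$. Here an effective Davenport--Heilbronn theorem enters: the number of $K\in N_3(X)$ in which $p$ has a prescribed unramified splitting type equals $|N_3(X)|$ times an explicit local density --- proportional, up to a common factor $1+O(1/p)$, to $\tfrac16,\tfrac12,\tfrac13$ for split, partial, inert --- plus an error $E(X,p)$ that is power-saving in $X$ and polynomial in $p$ (Davenport--Heilbronn with the quantitative geometry-of-numbers refinements of Belabas--Bhargava--Pomerance and Taniguchi--Thorne), together with $|N_3(X)|\asymp X$. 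The resulting limiting means are $c_1(p)=O(1/p)$, with vanishing leading term since $\Tr\rho(\mathrm{id})+2\Tr\rho((123))+3\Tr\rho((12))=2-2+0=0$ (the Chebotarev cancellation), and $c_2(p)=1+O(1/p)$. Hence the averaged first prime sum is $o(1)$, while the averaged second prime sum is $-\frac{2}{\log X}\sum_{p\le X^{1/100}}\frac{\log p}{p}\,\hat f\!\left(\frac{2\log p}{\log X}\right)+o(1)$, which by Mertens' theorem equals $-\frac12\int_{-\infty}^{\infty}\hat f(u)\,du+o(1)=-\frac12\int_{-1}^{1}\hat f(u)\,du+o(1)$ because $\supp\hat f\subset(-1,1)$. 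Therefore
\begin{align*}
\lim_{X\to\infty}\frac{1}{|N_3(X)|}\sum_{K\in N_3(X)}\mathscr D(K,f)=\hat f(0)-\frac12\int_{-1}^{1}\hat f(u)\,du=\int_{-\infty}^{\infty}f(t)\,W(Sp)(t)\,dt,
\end{align*}
and it is the positivity of $c_2(p)$ that picks out the symplectic kernel $1-\tfrac{\sin 2\pi t}{2\pi t}$ rather than an orthogonal one.

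The crux --- essentially the only hard step --- is bounding the error contributions, i.e.\ showing $\frac{1}{\log X}\sum_{p\le X^{1/50}}\frac{\log p}{\sqrt p}\cdot\frac{E(X,p)}{|N_3(X)|}\to 0$ and likewise with $\frac{\log p}{p}$. With bounds of the shape $E(X,p)\ll p^{A}X^{1-\delta}$ coming from the count of cubic fields subject to a local condition, the requirement $\sum_{p\le X^{\sigma}}\frac{\log p}{\sqrt p}\,p^{A}X^{-\delta}\to 0$ forces the support exponent $\sigma$ down to roughly $1/50$ --- this is the origin of the hypothesis $\supp\hat f\subset(-1/50,1/50)$, and any improvement of the Davenport--Heilbronn error term uniform in the local condition would immediately widen the admissible support. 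The remaining points are routine: the archimedean factor of $L_K$ varies with the signature of $K$ (real and complex cubic fields give different $\Gamma$-factors) but contributes only to the $o(1)$ term, uniformly over $[X,2X]$; and discarding the ramified primes after averaging is justified by the same effective count, since $\#\{K\in N_3(X):p\mid D_K\}\ll X/p$.
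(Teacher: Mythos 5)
The paper does not prove this statement; Theorem~\ref{Yangthm} is quoted from Yang's thesis~\cite{Y} purely to motivate, by contrast, the $U$-symmetry established in Theorem~\ref{Mainthm}. So there is no in-paper proof to compare against, and the comparison below is with the standard argument that Yang's proof follows. Your sketch is that argument: factor $\zeta_K=\zeta\cdot L(s,\rho)$ with $\rho$ the two-dimensional irreducible of $S_3$, note $\rho$ is monomial over the quadratic resolvent so that $L(s,\rho)$ is an entire self-dual Hecke $L$-function admitting the explicit formula, then average the prime sums via an effective Davenport--Heilbronn count with one prescribed local condition (Belabas--Bhargava--Pomerance; Taniguchi--Thorne), the error term $E(X,p)$ of which imposes the $(-1/50,1/50)$ support restriction. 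The Chebotarev cancellation $1\cdot2+3\cdot0+2\cdot(-1)=0$ kills the mean of $a_K(p)$, and the limit $c_2(p)\to 1$ produces the $\mathrm{Sp}$ kernel via Mertens. This is sound and is the route taken in the literature.

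One slip worth fixing: with your definition $a_K(p^k):=\Tr\bigl(\rho(\mathrm{Frob}_p)^k\bigr)$, which is indeed the quantity the explicit formula weighs by $\Lambda(p^k)p^{-k/2}$, the eigenvalues of $\rho(\mathrm{Frob}_p)$ at unramified $p$ are $\{1,1\}$, $\{1,-1\}$, $\{\zeta_3,\bar\zeta_3\}$ for the split, partial, and inert types, so $a_K(p^2)\in\{2,2,-1\}$, not $\{3,1,0\}$. The values $\{3,1,0\}$ are the Dirichlet coefficients $\lambda_\rho(p^2)=\tfrac12\bigl(a_K(p)^2+a_K(p^2)\bigr)$ of $L(s,\rho)$, which are a different object and do not enter the explicit formula in this form. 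By coincidence both sets of values average to $1$ against the densities $\tfrac16,\tfrac12,\tfrac13$ (because the second moment $\tfrac16\cdot4+\tfrac12\cdot0+\tfrac13\cdot1$ also equals $1$), so your final computation is unaffected, but the stated values are incorrect and the argument should not rely on that accident.
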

Therefore, the symmetry type of cubic $S_3$-fields is symplectic as well.

\subsection{Function Fields}

Every finite extension of $\Ff_q(t)$ corresponds to a smooth projective curve $C$. We define the zeta-function of the curve as
$$Z_C(u) = \exp\left(\sum_{n=1}^{\infty} N_n(C) \frac{u^n}{n}\right)$$
where $N_n(C)$ is the number of $\Ff_{q^n}$-rational points on $C$. Since the GRH is known (proved by Weil in \cite{Weil}) we have
$$Z_C(u) =\frac{L_C(u)}{(1-u)(1-qu)}$$
where $L_C(u)$ is a polynomial that satisfies the function equations
$$L_C(u) = (qu^2)^g L_C\left(\frac{1}{qu}\right)$$
where $g$ is the genus of the curve $C$ and all its roots lie on the ``half-line" $|u|= q^{-1/2}$. Hence, we can find a unitary symplectic $2g\times2g$ matrix $\Theta_C$, called the Frobenius class of $C$, such that
$$L_C(u) = \det(I-u\sqrt{q} \Theta_C).$$
Then the zeros of $L_C(u)$ correspond to the eigenangles of $\Theta_C$.

Since the eigenangles of $\Theta_C$ are $2\pi$-periodic we need to modify the one-level density definition a bit. So, for an even Schwartz test function $f$, define
$$F(\theta) = \sum_{k\in \mathbb{Z}} f\left(N\left(\frac{\theta}{2\pi}-k\right)\right)$$
so that $F$ is $2\pi$-periodic and centered on an interval of size roughly $1/N$. Then for any $N\times N$ unitary matrix $U$ with eigenangles $\theta_1\dots,\theta_N$, define
$$Z_f(U) = \sum_{j=1}^N F(\theta_j).$$
Finally, we then get that the one-level density for $C$ will be
$$\mathscr{D}(L_C,f) = Z_f(\Theta_C).$$

The literature on the one-level density in the function field setting give slightly different predictions than in the number field setting. For a suitably nice family of curves $\mathcal{F}$ and even Schwartz function $f$, the literature predicts
$$\frac{1}{|\mathcal{F}(X)|} \sum_{C\in \mathcal{F}(X)} Z_f(\Theta_C) = \int_{G} Z_f(U) dU + o(1)$$
where $G$ is the symmetry type and $dU$ is the Haar measure.

Specifically, Rudnick \cite{R} proved the following.

\begin{thm}\label{Rudthm}

Let $q$ be odd and $\mathcal{F}_{2g+1}$ be the set of hyperelliptic curves with affine model $C: Y^2 = f(X)$ with $\deg(f)=2g+1$ (and thus the genus of $C$ is $g$). Then if $\supp(\hat{f})\subset(-2,2)$,
$$ \frac{1}{|\mathcal{F}_{2g+1}|} \sum_{C\in\mathcal{F}_{2g+1}} Z_f(\Theta_C) = \int_{USp(2g)} Z_f(U) dU + O\left(\frac{1}{g}\right)$$

\end{thm}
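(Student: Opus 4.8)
The plan is to run the explicit-formula method: turn the averaged one-level density into an average of traces of powers of the Frobenius classes, and then evaluate those trace averages by counting points on $Y^2=F(X)$, which identifies them with averages of quadratic character sums. Writing $L_C(u)=\prod_{j=1}^{2g}(1-u\sqrt q\,e^{i\theta_j})$ and taking $-u\,d/du\,\log$ expresses $\sum_j e^{in\theta_j}=\Tr(\Theta_C^n)$ as a fixed polynomial in the coefficients of $L_C$. Expanding the periodization $F(\theta)=\sum_{k\in\Z}f\big(2g(\tfrac\theta{2\pi}-k)\big)$ in a Fourier series, $F(\theta)=\frac1{2g}\sum_m\hat f(\tfrac m{2g})e^{im\theta}$, and using $\supp(\hat f)\subset(-2,2)$, one gets
$$Z_f(\Theta_C)=\hat f(0)+\frac1{2g}\sum_{0<|n|<4g}\hat f\!\Big(\tfrac n{2g}\Big)\Tr(\Theta_C^n).$$
On the other side, the Weyl integration formula for $USp(2g)$ gives $\int_{USp(2g)}\Tr(U^n)\,dU=-1$ when $n$ is even with $2\le n\le 2g$ and $0$ otherwise, hence $\int_{USp(2g)}Z_f(U)\,dU=\hat f(0)-\frac1g\sum_{k=1}^g\hat f(k/g)$, which tends to $\int f(t)W(Sp)(t)\,dt$ as $g\to\infty$. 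So it suffices to show that $\langle\Tr(\Theta_C^n)\rangle:=\frac1{|\mathcal F_{2g+1}|}\sum_C\Tr(\Theta_C^n)$ differs from $\int_{USp(2g)}\Tr(U^n)\,dU$ by $\varepsilon_n$ with $\sum_{0<|n|<4g}|\varepsilon_n|=O(1)$.

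\textbf{Traces as character sums; the diagonal.} For $C:Y^2=F(X)$ with $F$ monic squarefree of degree $2g+1$, counting affine points with the quadratic character $\chi_n$ of $\Ff_{q^n}$ and adding the unique point at infinity gives $N_n(C)=q^n+1+\sum_{x\in\Ff_{q^n}}\chi_n(F(x))$; comparing with $N_n(C)=q^n+1-q^{n/2}\Tr(\Theta_C^n)$ gives the \emph{exact} identity $\Tr(\Theta_C^n)=-q^{-n/2}\sum_{x\in\Ff_{q^n}}\chi_n(F(x))$, so $\langle\Tr(\Theta_C^n)\rangle=-q^{-n/2}\sum_{x\in\Ff_{q^n}}\langle\chi_n(F(x))\rangle$. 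Split $x$ by its degree $e\mid n$ over $\Ff_q$. If $n/e$ is even, $\chi_n$ is trivial on $\Ff_{q^e}^\times$, so $\chi_n(F(x))=1$ unless $F(x)=0$, i.e.\ unless the (degree-$e$) minimal polynomial of $x$ divides $F$; a standard sieve over squarefree polynomials gives $\langle\chi_n(F(x))\rangle=1-q^{-e}+(\text{lower order})$ for such $x$. Summing these ``diagonal'' terms (which for $n$ even exhaust $\Ff_{q^{n/2}}$) gives $-q^{-n/2}\big(q^{n/2}-O(n^\varepsilon)\big)=-1+o(1)$ when $n$ is even and $0$ when $n$ is odd, and the individual errors decay like $n^\varepsilon q^{-n/2}$, so they sum to $O(1)$.

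\textbf{The off-diagonal --- the main obstacle --- and conclusion.} What remains is the contribution of the $x$ on which $\chi_n$ is \emph{non}trivial; a short check of degrees shows that for $n<4g$ these can only come from the $\asymp q^n$ elements generating $\Ff_{q^n}$, where $\chi_n(F(x))$ is the quadratic residue symbol $(\tfrac FP)$ with $P:=\mathrm{minpoly}(x)$. Writing $\mathbf 1[F\text{ squarefree}]=\sum_{b^2\mid F}\mu(b)$, $F=b^2G$, and using $\sum_{b\text{ monic}}\mu(b)u^{\deg b}=1-qu$ reduces $\langle(\tfrac FP)\rangle_F$ to the coefficients of $u^{2g+1}$ and $u^{2g-1}$ in the Dirichlet $L$-polynomial $\mathcal L(u,\chi_P)=\sum_{G\text{ monic}}(\tfrac GP)u^{\deg G}$ of degree $n-1$; these coefficients vanish when $n<2g$, so $\langle\Tr(\Theta_C^n)\rangle=-\eta_n+o(1)$ there ($\eta_n=1$ for $n$ even, $0$ for $n$ odd), in agreement with the matrix integral. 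For $2g<n<4g$ the off-diagonal is genuinely present, and one must show it contributes an amount cancelling the $-1$ from the diagonal, so that $\langle\Tr(\Theta_C^n)\rangle=o(1)=\int_{USp(2g)}\Tr(U^n)\,dU$. The Riemann Hypothesis for $\mathcal L(u,\chi_P)$ (Weil) bounds its coefficients, but too crudely to use term by term; instead one averages over the monic irreducibles $P$ of degree $n$, whereupon quadratic reciprocity turns $(\tfrac GP)$ into $(\tfrac PG)$ and a prime-number-theorem estimate for the resulting Dirichlet $L$-functions isolates the contribution of the $G$ that are perfect squares, which --- because $\deg G\le 2g+1<n$ throughout the range --- yields exactly the required secondary main term with a power-saving error. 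This averaged off-diagonal estimate, carried out uniformly in $n<4g$, is the heart of the proof, and it is precisely the condition $\supp(\hat f)\subset(-2,2)$ (keeping $n<4g$, hence $\deg G<n$) that makes it work. Inserting $\langle\Tr(\Theta_C^n)\rangle=\int_{USp(2g)}\Tr(U^n)\,dU+\varepsilon_n$ with $\sum|\varepsilon_n|=O(1)$ back into the explicit formula gives $\frac1{|\mathcal F_{2g+1}|}\sum_C Z_f(\Theta_C)=\int_{USp(2g)}Z_f(U)\,dU+O(1/g)$, which is the assertion.
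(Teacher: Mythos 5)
The paper does not prove this theorem: it is stated as a citation of Rudnick's result \cite{R}, so there is no in-paper argument to compare against. Your sketch faithfully reproduces the strategy of Rudnick's cited proof --- explicit formula reducing $Z_f$ to trace averages, the $USp(2g)$ matrix-integral evaluation, conversion of $\Tr(\Theta_C^n)$ to quadratic character sums via point counting, the M\"obius sieve over squarefree $F$ exposing coefficients of $\mathcal L(u,\chi_P)$, and the quadratic-reciprocity-plus-function-field-PNT average over $P$ that controls the range $2g<n<4g$. Note, though, that what you have is an outline rather than a complete argument: the off-diagonal estimate you call ``the heart of the proof'' is described but not carried out, the boundary cases $n$ near $2g$ (where $\deg G$ need not be strictly less than $n$) are glossed over, and the subdominant off-diagonal contributions from $x$ of degree $e<n$ with $n/e$ odd are dismissed with ``a short check of degrees'' rather than bounded; filling these in is exactly what Rudnick's paper does.
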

Hence, the symmetry type of hyperelliptic curves is $USp(2g)$. This is to be expected as all these curves correspond to quadratic extensions and Theorem \ref{KSthm} show that quadratic extensions in the number field setting have symmetry type Sp.

Now, Bucur, Costa, David, Guerreiro and Lowry-Duda \cite{BCDG+} prove the following.
\begin{thm}\label{BCDGthm}
Let $E_3(g)$ be the family of cubic non-Galois extension of $\Ff_q(X)$ with discriminant of degree $2g+4$. Then there exists a $\beta>0$ such that if $\supp(\hat{f})\subset(-\beta,\beta)$ then
$$ \frac{1}{|E_3(g)|} \sum_{C\in E_3(g)} Z_f(\Theta_C) = \int_{USp(2g)} Z_f(U) dU + O\left(\frac{1}{g}\right).$$
\end{thm}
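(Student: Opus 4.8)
The plan is to adapt Rudnick's proof of Theorem \ref{Rudthm}: pass through the explicit formula to write the one-level density as a main term plus a weighted sum of traces of powers of the Frobenius, and then evaluate the average of those traces using an equidistribution statement for the splitting of places in a random member of $E_3(g)$. Since $L_C(u)$ is the Artin $L$-function of the $2$-dimensional irreducible representation $\rho$ of the $S_3$ Galois closure of $C$, it has degree $N:=2g$; expanding $F$ into a Fourier series gives the exact identity
$$Z_f(\Theta_C)=\hat{f}(0)+\frac{1}{g}\sum_{1\le n\le 2g\beta}\hat{f}\!\left(\frac{n}{2g}\right)\Tr(\Theta_C^n),$$
using that $\Theta_C\in USp(2g)$ has a real palindromic characteristic polynomial so $\Tr(\Theta_C^{-n})=\Tr(\Theta_C^n)$, and that $\supp(\hat{f})\subset(-\beta,\beta)$ truncates the sum at $n\le 2g\beta$. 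Comparing the point counts of $C$ and of $\Pp^1_{\Ff_q}$ shows that $\Tr(\Theta_C^n)$ equals $q^{-n/2}$ times a sum, over places $v$ of $\Pp^1_{\Ff_q}$ of degree $e\mid n$ weighted by $e$, of a bounded quantity depending only on $n/e$ and on the decomposition type of $v$ in $C$, plus a contribution of the ramified places that is $O(1/g)$ on average over $E_3(g)$ and may therefore be discarded.

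The average then reduces to the following equidistribution input: for a place $v$ of $\Pp^1_{\Ff_q}$ of degree $e$ and a conjugacy class $\sigma\in\{1,(12),(123)\}$ of $S_3$,
$$\frac{1}{|E_3(g)|}\#\{C\in E_3(g):\mathrm{Frob}_v=\sigma\}=\delta_\sigma+\mathrm{err}(e,g),\qquad \delta_1=\tfrac16,\quad\delta_{(12)}=\tfrac12,\quad\delta_{(123)}=\tfrac13,$$
with $\mathrm{err}(e,g)$ power-saving and uniform in $q^e$ for all $e\le 2g\beta$. Granting this, a short computation with $\operatorname{tr}\rho(1^m)=2$, $\operatorname{tr}\rho((12)^m)\in\{0,2\}$ according to the parity of $m$, and $\operatorname{tr}\rho((123)^m)\in\{-1,2\}$ according to whether $3\mid m$, shows that for $n$ even the places of degree $n/2$ contribute the bulk of the average and produce $-1$, while the other degrees and the case of odd $n$ contribute $O(q^{-n/6})$; that is,
$$\frac{1}{|E_3(g)|}\sum_{C\in E_3(g)}\Tr(\Theta_C^n)=-\eta_n+O(q^{-n/6}),\qquad \eta_n=1\ \text{if}\ 2\mid n,\quad \eta_n=0\ \text{if}\ 2\nmid n.$$
Since $\beta<1$ we have $n\le 2g\beta<2g$, and the Weyl integration formula gives $\int_{USp(2g)}\Tr(U^n)\,dU=-\eta_n$ exactly on this range. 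Substituting the displayed average into the explicit formula and summing the (convergent) error series against $\frac1g\hat{f}(n/2g)$ yields
$$\frac{1}{|E_3(g)|}\sum_{C\in E_3(g)}Z_f(\Theta_C)=\hat{f}(0)-\frac1g\sum_{1\le n\le 2g\beta}\hat{f}\!\left(\frac{n}{2g}\right)\eta_n+O\!\left(\frac1g\right)=\int_{USp(2g)}Z_f(U)\,dU+O\!\left(\frac1g\right).$$

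To obtain the equidistribution input I would parametrise $E_3(g)$ by binary cubic forms over $\Ff_q[X]$ through the Delone--Faddeev/Davenport--Heilbronn correspondence: $GL_2(\Ff_q[X])$-orbits of forms $a\,u^3+b\,u^2w+c\,uw^2+d\,w^3$ with $a,b,c,d\in\Ff_q[X]$ correspond to cubic rings, the requirement that the discriminant of the form have degree $2g+4$ constrains the degrees of $a,b,c,d$, and one sieves to the forms that cut out a genuine field (discarding non-maximal forms, forms whose discriminant fails to be squarefree away from a controlled locus, and reducible forms). The condition $\mathrm{Frob}_v=\sigma$ becomes a condition on the reduction of the form modulo $v$, whose admissible residues have density exactly $\delta_\sigma$; counting forms in the corresponding union of residue classes modulo $v$ subject to the degree constraint, and keeping the sieve uniform, then gives the statement.

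The main obstacle is the uniformity of this last count when $\deg v$ is a fixed positive proportion of $g$: then the modulus $v$ is comparable in size to the forms being counted, so the error in the underlying lattice-point estimate over $\Ff_q[X]$, the secondary (Davenport--Heilbronn / Bhargava--Taniguchi--Thorne-type) main term of the cubic count, and the losses in the squarefree and maximality sieves must all be controlled with enough room to survive the double sum over $n\le 2g\beta$ and over all places of degree $\le 2g\beta$. It is this requirement --- nothing on the random-matrix side --- that forces $\beta$ to be a small positive constant, and pinning down the admissible $\beta$ is the delicate part of the proof. Alternatively one can run the count geometrically, realising $E_3(g)$ as $\Ff_q$-points of a moduli space of $S_3$-covers of $\Pp^1$ with prescribed branching and invoking Deligne's bounds on its $\ell$-adic cohomology; the uniformity issue then reappears as control on that cohomology.
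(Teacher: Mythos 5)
The paper does not prove Theorem~\ref{BCDGthm}; it is quoted from \cite{BCDG+} as background for the analogy between function fields and number fields, so there is no in-paper argument to compare your sketch against. Taken on its own terms, the formal part of your argument is correct: the identity $Z_f(\Theta_C)=\hat f(0)+\frac1g\sum_{n\ge1}\hat f(n/2g)\Tr(\Theta_C^n)$ is the standard discrete explicit formula (using that $\Theta_C$ is symplectic and $f$ is even); $L_C$ is indeed the Artin $L$-function of the two-dimensional irreducible representation $\rho$ of $S_3$; the local masses $\delta_1=1/6$, $\delta_{(12)}=1/2$, $\delta_{(123)}=1/3$ are the correct Davenport--Heilbronn densities away from ramification; the dominant term in $\frac{1}{|E_3(g)|}\sum_C\Tr(\Theta_C^n)$ does come from places of degree $n/2$ and gives $-\eta_n$ with secondary terms of size $q^{-n/6}$; and $\int_{USp(2g)}\Tr(U^n)\,dU=-\eta_n$ for $1\le n\le 2g$ matches this. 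So the framework and the random-matrix comparison are right.

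However, there is a genuine gap which you flag yourself and do not close: the entire argument is conditional on an effective, uniform equidistribution of $\mathrm{Frob}_v$ over $E_3(g)$ with $\deg v$ a positive proportion of $g$, and you neither prove nor quote such a statement. This is not a routine lemma to be supplied later; it \emph{is} the analytic content of the theorem. Concretely, in the $e=\deg v=n$ term the main term vanishes because $\sum_\sigma\delta_\sigma\Tr\rho(\sigma)=0$, so after weighting by the $\sim q^e/e$ places of degree $e$ and dividing by $q^{n/2}$ one needs $\mathrm{err}(n,g)$ to beat $q^{-n/2}$, i.e.\ essentially square-root cancellation in a cubic-field count twisted by a congruence condition modulo a polynomial whose degree is comparable to that of the discriminants being counted. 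The admissible $\beta$ in the theorem is exactly what such a uniform count (\`a la Davenport--Heilbronn/Zhao in the function-field setting) delivers, and obtaining it is where all the work lies. Your alternative geometric route through moduli of $S_3$-covers would face the identical difficulty, repackaged as control of the relevant cohomology. As written, the proposal correctly reconstructs the reduction but does not prove the theorem.
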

This again, matches with what is know from the number field case in Theorem \ref{Yangthm} as a cubic non-Galois extension would have Galois closure $S_3$.

Finally, in the same paper Bucur, Costa, David, Guerreiro and Lowry-Duda extend Rudnick's result.
\begin{thm}\label{BCDGthm2}
Let $\ell$ be an odd prime, $q\equiv 1 \mod{\ell}$ and $\F_{g,\ell}$ be the moduli space of curves of $\ell$ covers of genus $g$. Then if $\supp(\hat{f})\subset(-\frac{1}{\ell-1},\frac{1}{\ell-1})$
$$ \frac{1}{|\F_{g,\ell}|} \sum_{C\in \F_{g,\ell}} Z_f(\Theta_C) = \int_{U(2g)} Z_f(U) dU + O\left(\frac{1}{g}\right).$$
\end{thm}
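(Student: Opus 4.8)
\emph{Proof strategy for Theorem~\ref{BCDGthm2}.} The plan is to follow the template of Rudnick's proof of Theorem~\ref{Rudthm}: expand $Z_f(\Theta_C)$ into power sums of Frobenius eigenvalues via the explicit formula, translate those into character sums over $\Ff_q[t]$, average over the family, and estimate the averaged sums using Weil's Riemann Hypothesis for curves. Writing $N=2g$ and expanding the periodized test function $F$ in a Fourier series, one gets
$$Z_f(\Theta_C)\;=\;\widehat f(0)\;+\;\frac1N\sum_{n\neq0}\widehat f\!\Big(\frac nN\Big)\Tr(\Theta_C^{|n|}),$$
where only $|n|<N/(\ell-1)$ contribute once $\supp(\widehat f)\subset(-\tfrac1{\ell-1},\tfrac1{\ell-1})$. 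On the matrix side $\int_{U(N)}\Tr(U^{m})\,dU=0$ for every $m\neq0$ (the scalar matrices act transitively on phases), so $\int_{U(2g)}Z_f(U)\,dU=\widehat f(0)=\int f(t)W(U)(t)\,dt$; hence it remains to show that the family-average of $\Tr(\Theta_C^{n})$, weighted by $\tfrac1N\widehat f(n/N)$ and summed over $1\le n<N/(\ell-1)$, is $O(1/g)$. Since $q\equiv1\Mod{\ell}$, each cover comes with an order-$\ell$ character $\chi_D$ of $\Ff_q[t]$ and $L_C=\prod_{j=1}^{\ell-1}L(u,\chi_D^{j})$, so I would rewrite
$$\Tr(\Theta_C^{n})\;=\;-\frac1{q^{n/2}}\sum_{j=1}^{\ell-1}\ \sum_{\substack{h\ \mathrm{monic}\\ \deg h=n}}\Lambda(h)\,\chi_D^{j}(h)\;+\;O(1),$$
the $O(1)$ absorbing the bounded corrections from the place at infinity and from primes dividing $D$ to a higher power.

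Next I would parametrize the family. By Kummer theory $\F_{g,\ell}$ is described, up to the obvious equivalences $D\sim cD$, $D\sim D^{j}$, $D\sim D(\,\cdot\,)^{\ell}$, by $\ell$-th-power-free polynomials $D$; since every finite ramified place is tame with different exponent $\ell-1$, Riemann--Hurwitz forces $\deg(\mathrm{rad}\,D)=\tfrac{2g}{\ell-1}+O(1)=:m$ and $|\F_{g,\ell}|\asymp q^{m}$. As $\Lambda$ is supported on prime powers $h=P^{k}$, everything reduces to evaluating $\tfrac1{|\F_{g,\ell}|}\sum_{D\in\F_{g,\ell}}\chi_D^{j}(P^{k})$ for $k\deg P<N/(\ell-1)$. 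Using $\chi_D^{j}(P^{k})=(\tfrac PD)_{\ell}^{jk}$ and $\ell$-th-power reciprocity in $\Ff_q[t]$, this becomes the average over $D\in\F_{g,\ell}$ of a fixed Dirichlet character modulo $P$, times an explicit root of unity depending only on the degrees and leading coefficients; the character is trivial exactly when $\ell\mid k$.

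When $\ell\mid k$ the character is trivial, the sum over $D$ is $\asymp|\F_{g,\ell}|$, and the resulting ``diagonal'' contribution is $\tfrac1N\sum_P\sum_{\ell\mid k}\widehat f\!\big(\tfrac{k\deg P}N\big)\tfrac{(\ell-1)\deg P}{q^{k\deg P/2}}$; since $\ell\ge3$ one has $\sum_P\deg P\,q^{-\ell\deg P/2}=O(1)$, so this piece is $O(1/N)=O(1/g)$ --- it is this term (together with the boundary corrections) that produces the error term in the theorem. When $\ell\nmid k$ the character modulo $P$ is nontrivial; detecting the $\ell$-th-power-freeness of $D$ by Möbius inversion and assembling the inner sum into coefficients of an $L$-function, Weil's theorem yields $\bigl|\sum_{D\in\F_{g,\ell}}\chi_D^{j}(P^{k})\bigr|\ll q^{m/2}(\deg P)^{O(1)}$ uniformly. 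Dividing by $|\F_{g,\ell}|\asymp q^{m}$ and summing $\tfrac{\deg P}{q^{k\deg P/2}}$ over primes of degree up to $N/(\ell-1)$ introduces a factor $\asymp q^{N/(2(\ell-1))}$; for $\supp(\widehat f)\subset(-\beta,\beta)$ with $\beta<\tfrac1{\ell-1}$ the product behaves like $q^{\frac N2(\beta-\frac1{\ell-1})}$, which decays geometrically in $g$, so the off-diagonal part is negligible. Collecting the diagonal and off-diagonal contributions with the $O(1/g)$ boundary terms finishes the proof.

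The genuinely delicate point is this last step made rigorous and \emph{uniform}: controlling the Möbius (sieve) tail produced when the $\ell$-th-power-free constraint on $D$ is removed, without spoiling square-root cancellation; bounding the higher prime-power terms ($k\ge2$) and the place-at-infinity contributions so that they too are $O(1/g)$; and, above all, tracking precisely how $\deg(\mathrm{rad}\,D)$, the number of ramified places and $g$ are tied together by Riemann--Hurwitz, so that the admissible support threshold comes out as exactly $\tfrac1{\ell-1}$ rather than something smaller. For $\ell=2$ the character $\chi_D$ is self-dual and a Poisson-summation refinement of the $D$-average extends the range to $(-2,2)$ as in Theorem~\ref{Rudthm}; no such refinement is available for $\ell\ge3$, which is why the hypothesis restricts to odd primes. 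By contrast, the analytic input --- Weil's bound --- is available for free, the Riemann Hypothesis for curves being already known.
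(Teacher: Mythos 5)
The paper you are working from does not prove Theorem~\ref{BCDGthm2}; it is quoted verbatim from the cited work of Bucur, Costa, David, Guerreiro and Lowry-Duda \cite{BCDG+} as a piece of background motivation for the number-field conjecture, so there is no in-paper proof to compare your sketch against. Judged on its own terms, your outline does capture the standard function-field template that \cite{BCDG+} (and Rudnick \cite{R} before them) follow: Fourier-expand $Z_f$, invoke the explicit formula $L_C=\prod_{j=1}^{\ell-1}L(u,\chi_D^{j})$ with $\chi_D$ an order-$\ell$ character coming from Kummer theory since $q\equiv1\Mod{\ell}$, parametrize the family by $\ell$-th-power-free polynomials with $\deg(\mathrm{rad}\,D)\approx 2g/(\ell-1)$ via Riemann--Hurwitz, split into a diagonal piece ($\ell\mid k$, genuinely small because $\ell\ge3$) and an off-diagonal piece ($\ell\nmid k$) handled by reciprocity plus Weil's bound, and note that $\int_{U(N)}\Tr(U^m)\,dU=0$ for $m\neq0$ pins down the unitary main term. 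You also correctly flag the real technical work as the Möbius sieve for power-freeness, the higher prime-power terms, and the Riemann--Hurwitz bookkeeping that produces the $1/(\ell-1)$ threshold.

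One conceptual slip: your closing remark attributes the restriction to odd primes $\ell$ to the unavailability of a Poisson-summation range extension. That is not the reason. For $\ell=2$ the character $\chi_D$ is real, the single $L$-factor is self-dual, and the Frobenius class lands in $USp(2g)$, giving \emph{symplectic} symmetry (Theorem~\ref{Rudthm}); the conclusion ``$\int_{U(2g)}Z_f$'' would simply be false. For $\ell\ge3$ the factors $L(u,\chi_D^{j})$ pair up as complex conjugates with no individual self-duality, the Frobenius classes equidistribute in the full unitary group, and one gets $W(U)$. So the hypothesis ``$\ell$ odd'' is forced by the \emph{statement} (the symmetry type changes), not by a missing refinement in the method. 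Relatedly, you should be careful writing $\Tr(\Theta_C^{|n|})$ in the Fourier expansion: since $\Theta_C\in U(2g)$ the negative-$n$ terms are the complex conjugates $\Tr(\Theta_C^{-n})=\overline{\Tr(\Theta_C^{\,n})}$, not a repeat of the positive-$n$ terms, and it is this pairing (together with $\widehat f$ even) that makes $Z_f(\Theta_C)$ real; conflating them is harmless here but is exactly the distinction between the unitary and symplectic situations.
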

Here, we see a new symmetry type, that of $U(2g)$.

\subsection{Main Theorem}

The aim of this paper is to calculate the one-level density over cubic Galois number fields. Noticing the parallels in the function field setting and the number field setting we can use Theorem \ref{BCDGthm2} to predict that the symmetry type we should expect is $U$. Indeed that is what we find.

\begin{thm}\label{Mainthm}

Let $\F_3(X)$ be the family of cubic, Galois number fields of discriminant between $X$ and $2X$. Then if $f$ is an even Schwartz test function with $\supp(\hat{f})\subset(-1/14,1/14)$, we have
$$ \frac{1}{|\F_3(X)|} \sum_{K\in \F_3(X)}\mathscr{D}(K,f) = \int_{-\infty}^{\infty} f(t) W(U)(t)dt + O\left(\frac{1}{\log X}\right).$$
Moreover, if we assume GRH, then we can take $f$ with $\supp(\hat{f})\subset(-1/2,1/2)$.

\end{thm}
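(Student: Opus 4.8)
The plan is to proceed via the explicit formula, converting the sum over zeros of $L_K(s)$ into a sum over prime ideals weighted by $\hat{f}$, and then to average this over the family $\F_3(X)$. The first step is to recall that a cubic Galois field $K$ is totally real, ramified only at primes $p \equiv 1 \Mod 3$ (together with possibly $9$, coming from $p=3$), and that $K$ corresponds to a pair of conjugate primitive cubic Dirichlet characters $\chi, \bar\chi$ of conductor $\mathfrak{f}$ with $\mathfrak{f}^2 = D_K$; thus $L_K(s) = L(s,\chi)L(s,\bar\chi)$ factors as a product of two abelian $L$-functions. Applying the explicit formula to each factor, one gets
\begin{align*}
\mathscr{D}(K,f) = \frac{2\hat f(0)}{\log D_K}\Big(\text{archimedean}+\log D_K\Big) - \frac{2}{\log D_K}\sum_{n\ge 1}\frac{\Lambda(n)}{n^{1/2}}\,\widehat f\!\left(\frac{\log n}{\log D_K}\right)\big(\chi(n)+\bar\chi(n)\big) + \cdots,
\end{align*}
so the main term $\hat f(0) = \int f(t)W(U)(t)\,dt$ appears immediately, and everything reduces to showing the prime sum, averaged over $K \in \F_3(X)$, is $o(1)$. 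Since $\chi(n)+\bar\chi(n) = 2\,\mathrm{Re}\,\chi(n)$ and, for cubic characters, this equals $2$ when $\chi(n)=1$ and $-1$ otherwise, the diagonal $n$ (those that are cubes, or more precisely those $n$ with $\chi(n)=1$ for \emph{all} relevant $\chi$) would be the source of a secondary term — but one checks it is absorbed into the error because of the $n^{-1/2}$ decay and the restriction $\supp\hat f \subset (-1/14,1/14)$ (or $(-1/2,1/2)$ under GRH) limiting $n$ to $n \ll D_K^{1/14}$.

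The technical core is the character-sum average: I need an asymptotic (or sufficiently sharp upper bound) for
$$\frac{1}{|\F_3(X)|}\sum_{K\in\F_3(X)} \big(\chi_K(n) + \bar\chi_K(n)\big)$$
for each fixed prime power $n = p^k$, where $\chi_K$ ranges over the cubic characters attached to fields of discriminant in $[X,2X]$. The right framework is to parametrize: cubic Galois fields of conductor $\mathfrak{f}$ are in bijection with (unordered pairs of conjugate) primitive cubic characters mod $\mathfrak{f}$, and $\mathfrak{f}$ runs over products of primes $\equiv 1 \Mod 3$ (and possibly a factor $9$); the number of such characters of conductor exactly $\mathfrak{f}$ is $\prod_{p\mid\mathfrak{f}}\,(\text{const})$, essentially $2^{\omega(\mathfrak{f})-1}$ or so. So I would write the average as a double sum over conductors $\mathfrak{f} \le \sqrt{2X}$ and over cubic characters mod $\mathfrak{f}$, and use orthogonality of characters on $(\Z/\mathfrak{f}\Z)^*$ to detect the condition "$\chi(p) = $ a cube root of unity". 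The key estimate is that $\sum_{\chi \text{ cubic mod }\mathfrak{f}} \chi(n)$ is small unless $n$ is a cube modulo $\mathfrak{f}$, i.e. unless $p$ itself is a cubic residue mod every prime dividing $\mathfrak{f}$ — and summing the resulting main contribution over $\mathfrak{f}$ brings in $\log$-power savings, while the off-diagonal is controlled by a Pólya–Vinogradov / large-sieve type bound for cubic character sums. This counting of cubic fields with a congruence constraint, and making the error terms uniform in $n$ up to $n\ll X^{1/14}$, is exactly where the support restriction $1/14$ comes from (it is presumably dictated by the quality of the available bound on $\sum_{\mathfrak{f}} \#\{\chi \bmod \mathfrak{f}\}$ versus the length of the prime sum).

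The main obstacle, then, is this family-averaged cubic character sum: one must count cubic Galois fields of bounded discriminant lying in a fixed residue class (or, dually, with a prescribed splitting behaviour at $p$) with a power-saving error term, uniformly in the modulus $p$. Unlike the quadratic case — where quadratic characters are genuine Jacobi symbols and quadratic reciprocity plus the Pólya–Vinogradov inequality give clean bounds — cubic characters require either the law of cubic reciprocity in $\Z[\omega]$ or an appeal to the work of Fouvry–Koutsoliotas, Baier–Young, or Heath-Brown on sums of cubic characters / cubic Gauss sums. I would set up the computation by pulling the problem into $\Z[\omega]$ (where cubic characters become ring-class-style characters and cubic reciprocity is available), carefully track the contribution of the prime $3$ (which ramifies and gives the possible conductor factor $9$), separate the "$p$ is everywhere a cubic residue" diagonal term (which I expect to contribute a lower-order term that is nonetheless $O(1/\log X)$ after division by $\log D_K$, consistent with the stated error), and bound the rest by the cubic large sieve. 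Under GRH one instead uses the standard conditional bound on short character sums for each individual $\chi_K$, which decouples the fields and extends the support to $(-1/2,1/2)$ in the usual way. Finally I would verify that the archimedean terms from the explicit formula, together with the contribution of $n$ that are higher prime powers, contribute only $O(1/\log X)$, completing the proof.
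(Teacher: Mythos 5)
Your high-level outline matches the paper's: apply the explicit formula, reduce the average of $\mathscr{D}(K,f)$ over $\F_3(X)$ to a family-averaged cubic character sum at each prime, and pass to $\Z[\zeta_3]$ where the cubic residue symbol lives. But there are two genuine problems in the proposal.

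The first is a conceptual slip about the ``secondary term.'' You say the $m=2$ (i.e.\ $n=p^2$) contribution would be a source of a lower-order term that is ``absorbed into the error because of the $n^{-1/2}$ decay and the restriction on $\supp\hat f$.'' That reasoning cannot be right: in the symplectic case (quadratic fields), the $p^2$ term is exactly what produces the $-\sin(2\pi t)/(2\pi t)$ in $W(Sp)$, and no support restriction or $n^{-1/2}$ decay makes it vanish. The actual reason there is no such term here is that for a cubic field one has $\lambda_D(p^2)=\chi_p(D)+\chi_p^2(D)$, which is an oscillating character sum in $D$ with no constant part --- unlike the quadratic case, where $\chi_p(D)^2\equiv 1$ contributes a constant. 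So the unitary symmetry type is not an accident of small support; it is forced by the cubicity of $\chi_p$. Your proposal treats the $m=1$ and $m=2$ sums as if they were structurally different, when in fact they are bounded by exactly the same argument.

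The second and more serious gap is the technical core. After dualizing, the paper does not use orthogonality of characters mod $\mathfrak{f}$ nor a cubic large sieve or P\'olya--Vinogradov bound. Instead, for each fixed $p$ it forms the generating Dirichlet series $\mathcal{G}_p(s)=\sideset{}{'}\sum_{d_1,d_2}\chi_p(d_1d_2^2)(d_1d_2)^{-s}$ over the family parameter $D=d_1d_2^2$, and shows (Proposition \ref{genserprop}) that $\mathcal{G}_p(s)^2=L_K(\chi_{\mathfrak{P}},s)L_K(\chi_{\mathfrak{P}}^2,s)H_p(s)$, a product of cubic Hecke $L$-functions over $K=\Q(\zeta_3)$ twisted by the residue symbol mod $\mathfrak{P}$. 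Perron's formula and a contour shift then control $\sum_{d_1d_2\le Y}\chi_p(d_1d_2^2)$ \emph{for those $p$ whose $L_K(\chi_{\mathfrak{P}},\cdot)$ has no zeros near $\Re(s)=1$}, and the set of exceptional $p$ is bounded by Kowalski's zero-density estimate $N(\alpha,T,\chi)\ll(Q^2T)^{4(1-\alpha)/(3-2\alpha)}(\log QT)^A$. Optimizing $\alpha$ and $T$ against the prime-sum length $X^{\beta}$ is precisely where the threshold $\beta<1/14$ comes from (the condition $\alpha>13/14$ is needed to make the dyadic series converge). Your proposal names alternative tools (Heath-Brown's cubic large sieve, Baier--Young) but never executes any estimate, and explicitly admits the exponent $1/14$ is ``presumably dictated by the quality of the available bound.'' As written, this is a sketch of a program, not a proof: there is no derivation showing that orthogonality plus a cubic large sieve actually yields $O(1/\log X)$ uniformly in $p\le X^{1/14}$, and it is not clear that the diagonal you would get from ``$p$ a cubic residue mod every $q\mid\mathfrak{f}$'' cancels without the zero-density machinery the paper relies on.
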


The proof of Theorem \ref{Mainthm} only relies on two facts: $(1)$ $3$ is a prime and $(2)$ that $\Z[\zeta_3]$ is a PID. Therefore, the same arguments could be extended to the family of $\Z/p\Z$ Galois number fields where $p$ is an odd prime such that $\Z[\zeta_p]$ is a PID. Unfortunately, these conditions are very limited as this is only true for primes less than $20$. However, with this and Theorem \ref{BCDGthm2} it is reasonable to conjecture the following.

\begin{conj}

Let $p$ be an odd prime and $\F_p(X)$ be the family of $\Z/p\Z$ Galois number fields of discriminant between $X$ and $2X$. Then there exists a $\beta>0$ (dependent only on $p$) such that for every even Schwartz test function $f$ such that $\supp(\hat{f})\subset(-\beta,\beta)$, we have
$$\lim_{X\to\infty} \frac{1}{|\F_p(X)|} \sum_{K\in \F_p(X)}\mathscr{D}(K,f) = \int_{\infty}^{\infty} f(t) W(U)(t)dt.$$

\end{conj}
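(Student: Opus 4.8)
The plan is to imitate the proof of Theorem~\ref{Mainthm} and to locate precisely the single step where ``$\Z[\zeta_3]$ is a PID'' was essential. By class field theory, every $K\in\F_p(X)$ is the fixed field of the kernel of a primitive Dirichlet character $\chi$ of order exactly $p$, whose conductor $\mathfrak f=\mathfrak f_\chi$ is a squarefree product of primes $\equiv1\Mod p$ (possibly times $p^2$), and the conductor--discriminant formula gives $D_K=\mathfrak f^{\,p-1}$. The Artin factorization $\zeta_K(s)/\zeta(s)=\prod_{j=1}^{p-1}L(s,\chi^j)$, all factors of conductor $\mathfrak f$, yields
$$\mathscr D(K,f)=\sum_{j=1}^{p-1}\ \sum_{L(\frac12+i\gamma,\chi^j)=0} f\!\left(\frac{\gamma\log D_K}{2\pi}\right).$$
So $\F_p(X)$ can be replaced by a family of order-$p$ characters with $\mathfrak f$ in a dyadic window, and $|\F_p(X)|$ together with the averages below are evaluated through the known asymptotics for counting order-$p$ Dirichlet characters with prescribed local behaviour at finitely many primes (Selberg--Delange; Wright; M\"aki).

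Applying the explicit formula to each $L(s,\chi^j)$, the $p-1$ archimedean contributions each equal $\frac{\log\mathfrak f}{\log D_K}\hat f(0)+O(1/\log X)=\frac1{p-1}\hat f(0)+O(1/\log X)$, and sum to $\hat f(0)+O(1/\log X)=\int_{-\infty}^{\infty}f(t)W(U)(t)\,dt+O(1/\log X)$ --- the multiplicity $p-1$ exactly absorbing the over-normalization by $\log D_K=(p-1)\log\mathfrak f$. It remains to show that the family-average of the prime sum
$$-\frac{2}{\log D_K}\sum_{n\ge2}\frac{\Lambda(n)}{\sqrt n}\Big(\sum_{j=1}^{p-1}\chi^j(n)\Big)\hat f\!\left(\frac{\log n}{\log D_K}\right)$$
tends to $0$.

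For a prime power $n=\ell^k$ one has $\sum_{j=1}^{p-1}\chi^j(\ell^k)=p\,\mathbf{1}[\ell\nmid\mathfrak f,\ \chi(\ell)^k=1]-\mathbf{1}[\ell\nmid\mathfrak f]$. If $p\mid k$ this equals $(p-1)\mathbf{1}[\ell\nmid\mathfrak f]$, and since $p\ge3$ we have $\sum_{k\ge1}\sum_\ell\log\ell\,\ell^{-kp/2}<\infty$, so this part of the prime sum is already $O(1/\log X)$ with no averaging needed; this is exactly why the symmetry type is unitary rather than symplectic, mirroring the function-field dichotomy between $\ell$-covers (Theorem~\ref{BCDGthm2}) and hyperelliptic curves (Theorem~\ref{Rudthm}), since for $p=2$ this piece would be $\asymp1$ and would produce the $-\sin(2\pi t)/(2\pi t)$ term. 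If $p\nmid k$, then $\chi(\ell)^k=1\iff\chi(\ell)=1\iff\ell$ splits completely in $K$, so the coefficient is $p\,\mathbf{1}[\ell\text{ splits in }K]-1+\mathbf{1}[\ell\text{ ramifies in }K]$, independent of $k$; the ramified term is harmless after averaging (its contribution is $\ll\sum_\ell\log\ell\,\ell^{-1}\Prob[\ell\mid\mathfrak f]\ll\sum_\ell\log\ell\,\ell^{-2}<\infty$), and the $k\ge2$ terms carry an extra $\ell^{-1/2}$ and are dominated by the estimate for $k=1$. Everything thus reduces to showing
$$\frac1{\log X}\sum_{\ell}\frac{\log\ell}{\sqrt\ell}\,\hat f\!\Big(\frac{\log\ell}{\log D_K}\Big)\Big(\frac{p}{|\F_p(X)|}\sum_{K\in\F_p(X)}\mathbf{1}[\ell\text{ splits in }K]-1\Big)\longrightarrow0.$$

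The last display is where the difficulty lies. By orthogonality $\mathbf{1}[\ell\text{ splits in }K]=\frac1p\sum_{a=0}^{p-1}\chi_K^a(\ell)$, so the bracket is $\frac1{|\F_p(X)|}\sum_{a=1}^{p-1}\sum_K\chi_K^a(\ell)$, and --- since $\chi\mapsto\chi^a$ permutes the order-$p$ characters of a fixed conductor --- the whole quantity is governed by the character sums $S(\ell;\mathfrak f):=\sum_{\chi\bmod\mathfrak f,\ \ord\chi=p}\chi(\ell)$ summed over $\mathfrak f$ in a dyadic window. Now $S(\ell;\mathfrak f)$ has size $\asymp(p-1)^{\omega(\mathfrak f)}$ --- with \emph{no} internal cancellation --- precisely when $\ell$ is a $p$-th power residue modulo $\mathfrak f$, so the trivial bound is useless and one must extract cancellation from the sum over $\mathfrak f$. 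This is where power-residue reciprocity enters: over $\Q(\zeta_p)$ one expresses $\chi(\ell)$ through the $p$-th power residue symbol $\big(\tfrac{\ell}{\cdot}\big)_p$ on ideals, flips it by reciprocity into a symbol in the $\mathfrak f$-aspect of modulus $\asymp\ell$, and then invokes Poisson summation / P\'olya--Vinogradov / the large sieve over $\Q(\zeta_p)$ to save a power of the conductor when $\ell$ is not too large relative to $X^{1/(p-1)}$; this should yield some explicit $\beta=\beta(p)>0$, with a larger value under GRH for the relevant Hecke $L$-functions. For $p=3$ the PID property of $\Z[\zeta_3]$ makes both the reciprocity flip and the ensuing summation completely clean (cubic reciprocity on elements, Poisson over $\Z[\zeta_3]$), which is exactly what delivers Theorem~\ref{Mainthm}. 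For general $p$ the failure of unique factorization forces one to carry the $p$-part of the class group and of the relevant ray class groups of $\Q(\zeta_p)$ through the reciprocity step, alongside the messier behaviour of Eisenstein reciprocity and of the ramification at $p$; making this uniform in $\ell$ is the genuine obstacle, and the reason the statement is given only as a conjecture. Granting such control, the reduction above turns the conjecture into a power-saving estimate for power-residue character sums of the kind carried out for $p=3$.
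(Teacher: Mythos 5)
The statement you were asked to address is stated in the paper as a \emph{conjecture}: the paper contains no proof of it, only the supporting evidence of Theorem \ref{Mainthm} (the case $p=3$), the remark that that proof uses only the two facts that $3$ is prime and that $\Z[\zeta_3]$ is a PID (so the same argument is claimed to extend to the odd primes $p$ for which $\Z[\zeta_p]$ is a PID, i.e.\ $p<20$), and the function-field analogue of Theorem \ref{BCDGthm2}. Your proposal is likewise not a proof, and you say so explicitly: after the (correct) reductions --- conductor--discriminant giving $D_K=\mathfrak f^{\,p-1}$, the Artin factorization, the evaluation of $\sum_{j=1}^{p-1}\chi^j(\ell^k)$, the observation that the $p\mid k$ terms are absolutely convergent for $p\ge 3$ (which is indeed the structural reason the symmetry is unitary rather than symplectic) --- everything hinges on an unproven power-saving cancellation for the sums $S(\ell;\mathfrak f)$ averaged over conductors, and you name the uniform treatment of the class group and ray class groups of $\Q(\zeta_p)$ in the reciprocity step as the open point. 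So there is a genuine gap, but it is the unavoidable one: your write-up is an honest reduction of the conjecture to its core difficulty, not a closure of it, and in that sense it matches the status the statement has in the paper.

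Two points of comparison with how the paper actually handles $p=3$, which you may want to absorb if you intend your sketch to reflect the mechanism that is supposed to generalize. First, the paper never performs a reciprocity flip nor invokes P\'olya--Vinogradov or a large sieve: it parametrizes the fields by cube-free $3$-split integers $D=3^{v_3(D)}d_1d_2^2$ (Section \ref{classify}), and Kummer theory over $\Q(\zeta_3)$ already expresses the splitting of the rational prime in the explicit formula (your $\ell$, the paper's $p$) as a cubic residue character modulo a prime $\mathfrak{P}$ of $\Q(\zeta_3)$ above it, evaluated at $D$ --- i.e.\ the sum is from the start a character sum in the $D$-aspect with small modulus. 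The cancellation is then extracted by packaging it into the generating series $\mathcal{G}_p(s)$, proving $\mathcal{G}_p(s)^2=L_K(\chi_{\mathfrak{P}},s)L_K(\chi_{\mathfrak{P}}^2,s)H_p(s)$ (Proposition \ref{genserprop}), and applying Perron's formula, a contour shift, and the zero-density estimate of Theorem \ref{Kovalthm} to control the exceptional moduli (GRH replacing the zero-density input in the conditional range). Second, the PID hypothesis enters only in choosing elements $D_1$, $D_2$ generating the split prime ideals (Lemma \ref{3-factorization}); consequently, for the primes $p\le 19$ the obstacle you describe is, according to the paper, absent, and the genuinely open case of the conjecture is the one where the class number of $\Q(\zeta_p)$ exceeds $1$ --- which is where your proposed reciprocity-plus-harmonic-analysis route would indeed have to carry the class group data you mention.
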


\textbf{Acknowledgements:} I would like to thank Ze\'ev Rudnick for the useful discussions.

The research leading to these results has received funding from the European Research Council under the European Union's Seventh Framework Programme (FP7/2007-2013) / ERC grant agreement n$^{\text{o}}$ 320755.

\section{Classifying Cubic Galois Extensions}\label{classify}

In this section we will give a construction for all cubic Galois extensions of $\Q$.

\subsection{Class Field Theory}

We will begin by stating some main results of class field theory. For general reference we refer the reader to \cite{CF}.

Let $K$ be a global field. Denote $\mathcal{D}(K)$ the group of divisors of $K$. For any effective divisor $\mathfrak{m}\in\mathcal{D}(K)$, define
\begin{align*}
\mathcal{D}_{\mathfrak{m}}(K) & = \{D\in\mathcal{D}(K) : \supp(D)\cap\supp(\mathfrak{m})=\emptyset\} \\
\mathcal{P}_{\mathfrak{m}}(K) & = \{(a) : a\in K^*, a \equiv 1 \mod{\mathfrak{P}^{\ord_{\mathfrak{P}} (\mathfrak{m})}}  \mbox{ for all places $\mathfrak{P}$ of $K$} \} \\
\mathcal{C}\ell_{\mathfrak{m}}(K) & = \mathcal{D}_{\mathfrak{m}}(K)/\mathcal{P}_{\mathfrak{m}}(K).
\end{align*}
$\mathcal{P}_{\mathfrak{m}}(K)$ is the \textbf{ray} of $K$ modulo $\mathfrak{m}$ and $\mathcal{C}\ell_{\mathfrak{m}}(K)$ is the \textbf{ray class group} of $K$ modulo $\mathfrak{m}$.

\begin{thm}

There is a one-to-one correspondence between finite abelian Galois extensions $L$ of $K$ unramified outside of $\mathfrak{m}$ with Galois group $G$ and subgroups $H$ of $\mathcal{C}\ell_{\mathfrak{m}}(K)$ such that
$$G \cong \mathcal{C}\ell_{\mathfrak{m}}(K)/H $$

\end{thm}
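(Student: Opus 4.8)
The plan is to deduce this from the two central theorems of class field theory in their idelic form — the Artin reciprocity law and the existence theorem — so that the real work here is translation rather than new mathematics; complete proofs of the inputs are in \cite{CF}. The first step is to rephrase the ray class group adelically. Write $\mathbb{A}_K^\times$ for the idele group of $K$ and $C_K=\mathbb{A}_K^\times/K^\times$ for the idele class group, and to the effective divisor $\mathfrak{m}$ attach the open subgroup $U_{\mathfrak{m}}\subseteq\mathbb{A}_K^\times$ of ideles $(x_{\mathfrak{P}})$ that are units at every $\mathfrak{P}\notin\supp(\mathfrak{m})$ and satisfy $x_{\mathfrak{P}}\equiv 1\Mod{\mathfrak{P}^{\ord_{\mathfrak{P}}(\mathfrak{m})}}$ at each $\mathfrak{P}\in\supp(\mathfrak{m})$ (together with the sign conditions at the real places occurring in $\mathfrak{m}$). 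An approximation/Chinese-remainder argument, matching the conditions in the definitions of $\mathcal{D}_{\mathfrak{m}}(K)$ and $\mathcal{P}_{\mathfrak{m}}(K)$ against congruence conditions on ideles, yields a canonical isomorphism $\mathcal{C}\ell_{\mathfrak{m}}(K)\cong C_K/\overline{U_{\mathfrak{m}}K^\times/K^\times}$. Consequently subgroups $H\leq\mathcal{C}\ell_{\mathfrak{m}}(K)$ are in bijection with the open subgroups of $C_K$ containing the image of $U_{\mathfrak{m}}$.

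Next I would feed in reciprocity. For each finite abelian $L/K$ the Artin map gives a continuous surjection $C_K\twoheadrightarrow\Gal(L/K)$ with kernel exactly the norm group $N_{L/K}C_L$, and these maps are compatible under restriction, assembling into the global reciprocity map $C_K\to\Gal(K^{ab}/K)$ (whose kernel, over a number field, is the identity component of $C_K$). Reading off the local components, $L/K$ has conductor dividing $\mathfrak{m}$ — in particular is unramified outside $\supp(\mathfrak{m})$ — precisely when $N_{L/K}C_L$ contains the image of $U_{\mathfrak{m}}$. Combining this with the identification of the previous paragraph, the assignment $L\mapsto H:=\bigl(\text{image of }N_{L/K}C_L\text{ in }\mathcal{C}\ell_{\mathfrak{m}}(K)\bigr)$ injects the set of such extensions into the set of subgroups of $\mathcal{C}\ell_{\mathfrak{m}}(K)$, and for each such $L$ the reciprocity map descends to an isomorphism $\mathcal{C}\ell_{\mathfrak{m}}(K)/H\xrightarrow{\ \sim\ }\Gal(L/K)$, i.e. $G\cong\mathcal{C}\ell_{\mathfrak{m}}(K)/H$. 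The surjectivity of $L\mapsto H$ — that every subgroup of the ray class group is realised — is exactly the existence theorem: every open finite-index subgroup of $C_K$ is a norm group.

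The genuinely hard content, which I would not reprove, is the reciprocity law and the existence theorem themselves, and this is where I expect the real obstacle to lie. The standard route first establishes the two fundamental inequalities on the index $[C_K:N_{L/K}C_L]$ (one via analytic properties of Hecke $L$-functions, the other via Kummer theory and a Herbrand-quotient computation), then computes $H^2(\Gal(L/K),C_L)$ for cyclic $L/K$ to pin down the local and global fundamental classes, and finally invokes Tate's theorem on the cohomology of a group equipped with a class having the correct local behaviour — combined with dévissage from cyclic to arbitrary abelian groups — to convert the fundamental class into the reciprocity isomorphism; the existence theorem then follows from Kummer theory together with the nonvanishing of Hecke $L$-functions at $s=1$. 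The remaining care in the argument above is bookkeeping: one must match ``unramified outside $\mathfrak{m}$'' with the precise condition ``conductor dividing $\mathfrak{m}$'' so that the map $L\mapsto H$ is genuinely a bijection and no extension is counted twice.
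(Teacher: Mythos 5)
The paper does not prove this theorem; it is stated as background from class field theory with a pointer to \cite{CF}, so there is no in-paper argument to compare against. Your idelic outline is the standard derivation and is correct as a sketch: identify $\mathcal{C}\ell_{\mathfrak{m}}(K)$ with the quotient of the idele class group $C_K$ by the image of $U_{\mathfrak{m}}$, apply Artin reciprocity to identify $\Gal(L/K)$ with $C_K/N_{L/K}C_L$ for each finite abelian $L/K$, and invoke the existence theorem to see that every subgroup of $\mathcal{C}\ell_{\mathfrak{m}}(K)$ arises this way. Deferring the reciprocity law and the existence theorem to \cite{CF} is exactly what the paper itself does, so no gap there.

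The one substantive point you flag at the end is in fact the place where the theorem as written is slightly loose, and you are right to worry about it: ``unramified outside $\mathfrak{m}$'' should really be ``conductor dividing $\mathfrak{m}$''. These are not the same in general --- for $K=\Q$ and $\mathfrak{m}=p$, the field $\Q(\zeta_{p^2})$ is unramified outside $\supp(\mathfrak{m})$ but corresponds to no subgroup of $\mathcal{C}\ell_p(\Q)=\left(\Z/p\Z\right)^*$. In the paper's application the discrepancy is harmless, because one immediately passes to the $3$-torsion subgroup $\mathcal{C}\ell_m(\Q)[3]$: for a cyclic cubic extension the conductor exponent is $1$ at a tame ramified prime and $2$ at $p=3$ (this is why $\delta_m$, detecting $9\mid m$, appears right after the theorem), so only $\supp(m)$ and the condition $9\mid m$ ever matter. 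But as a freestanding assertion the theorem needs the conductor formulation, and your observation that the map $L\mapsto H$ must be matched against ``conductor divides $\mathfrak{m}$'' rather than ``unramified outside $\mathfrak{m}$'' is the genuine piece of care in turning the sketch into a proof.
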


If we set $K=\Q$, $\mathcal{D}(\Q) \cong \Q_{\geq0}$ and effective divisors correspond to positive integers. Hence, we will write an effective divisor of $\Q$ as $m$ instead of $\mathfrak{m}$ to illustrate that it is an integer. Further, we will denote $\supp(m)$ as the set of primes dividing $m$. Therefore, from the definitions, we get that
$$\mathcal{C}\ell_m(\Q) = \left(\Z/m\Z\right)^*.$$
Moreover, if we want to find subgroups of $\mathcal{C}\ell_m(\Q)$ such that
$$\mathcal{C}\ell_m(\Q)/H \cong \Z/3\Z$$
it suffices to look for subgroups of index $3$ of the three torsion subgroup of the ray class group:
$$\mathcal{C}\ell_m(K)[3] = \left(\Z/3\Z\right)^{\delta_{\mathfrak{m}}}\times\prod_{\substack{p|m \\ p\equiv 1 \mod{3}}} \Z/3\Z$$
where $\delta_m=1$ if $9|m$ and $0$ otherwise. Finally, since $\mathcal{C}\ell_m(K)[3]$ is a finite abelian group, subgroups of index $3$ are in one-to-one correspondence with subgroups isomorphic to $\Z/3\Z$.

Before we state the next result, we need a definition.

\begin{defn}

Call an integer \textbf{$3$-split} if all it's prime divisors are congruent to $0$ or $1 \mod{3}$.

\end{defn}

\begin{lem}\label{classifylem}

For any integer $m$, there is a two-to-one correspondence between cube-free $3$-split integers, $D$, such that $\supp(D)\subset\supp(m)$ and cubic Galois extensions of $\Q$ unramified outside of the primes dividing $m$.

\end{lem}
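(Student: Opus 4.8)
The plan is to leverage the class field theory correspondence stated just above: cubic Galois extensions of $\Q$ unramified outside $\supp(m)$ correspond to subgroups $H$ of $\mathcal{C}\ell_m(\Q)[3]$ of index $3$, equivalently (by finiteness and self-duality of finite abelian groups) to subgroups of $\mathcal{C}\ell_m(\Q)[3]$ isomorphic to $\Z/3\Z$. So the task reduces to a purely combinatorial count: show that the $\Z/3\Z$-subgroups of $\mathcal{C}\ell_m(\Q)[3]$ are in two-to-one correspondence with cube-free $3$-split integers $D$ with $\supp(D)\subset\supp(m)$.

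First I would fix notation: write $r = \delta_m + \#\{p \mid m : p \equiv 1 \bmod 3\}$, so that $\mathcal{C}\ell_m(\Q)[3] \cong (\Z/3\Z)^r$ as an $\Ff_3$-vector space. I would index the coordinates by the primes $p \mid m$ with $p \equiv 1 \bmod 3$, together with one extra coordinate (the ``$9$'' coordinate) present exactly when $9 \mid m$; call this last the prime $3$ for bookkeeping purposes, so coordinates are naturally indexed by a set $S$ of primes, each $\equiv 0$ or $1 \bmod 3$, with $|S| = r$. A one-dimensional subspace of $(\Ff_3)^r$ is a line through the origin; there are $(3^r-1)/2$ of them. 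On the other side, a cube-free $3$-split integer $D$ with $\supp(D) \subset \supp(m)$ is determined by its support together with, for each prime in the support, an exponent $1$ or $2$; but only primes $\equiv 0,1 \bmod 3$ may divide $D$, and the prime $3$ may divide $D$ to a power $1$ or $2$ only in a way compatible with $9 \mid m$. The key observation is that the data ``cube-free $3$-split $D \neq 1$, modulo the equivalence $D \sim D^2$'' (i.e.\ swapping all exponents $a_p \mapsto -a_p \bmod 3$) is exactly the data of a nonzero vector in $(\Ff_3)^r$ up to sign, because prescribing an exponent in $\{1,2\} = \{1,-1\} \subset \Ff_3^\times$ at each prime of a chosen nonempty support is the same as prescribing a nonzero $\Ff_3$-vector with that support, and the sign identification $D \leftrightarrow D^2$ matches $v \leftrightarrow -v$. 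I would need to handle $D = 1$ separately: it corresponds to the trivial extension, which is excluded, matching the exclusion of the zero vector.

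The concrete bijection I would write down: given a line $\ell \subset \mathcal{C}\ell_m(\Q)[3]$, pick a generator, read off its coordinates as exponents $a_p \in \{1,2\}$ (lifting $\Ff_3^\times$ to $\{1,2\}\subset\Z$) on the support $T = \{p : a_p \neq 0\}$, and set $D = \prod_{p \in T} p^{a_p}$; the two generators of $\ell$ give $D$ and $D^2 \bmod (\text{cubes})$, i.e.\ the two distinct cube-free $3$-split integers attached to $\ell$ — hence the stated two-to-one correspondence, provided $D \neq D^2$, which holds since $D$ is not a perfect cube (it is cube-free and $\neq 1$) and not $1$. Conversely $D$ determines its support and exponent vector, hence a nonzero vector in $(\Ff_3)^r$, hence a line. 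I would close by spelling out why the ``$9$'' coordinate is forced to correspond to the prime $3$ with the $\delta_m$ condition: a cubic field ramified at $3$ has $3$-adic valuation of the discriminant equal to $4$ (wild ramification), which on the integer side is recorded by $3 \| D$ or $3^2 \| D$ together with $9 \mid m$ being exactly the condition that permits the extra $\Z/3\Z$ factor in the ray class group — this is the one place where the correspondence is not literally ``support $\subset$ support'' but requires the compatibility of the $3$-part.

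The main obstacle, and the only genuinely delicate point, is this matching of the ramified-at-$3$ case: one must verify that the exceptional $(\Z/3\Z)^{\delta_m}$ factor in $\mathcal{C}\ell_m(\Q)[3]$ is correctly aligned with allowing $3 \mid D$ (to exponent $1$ or $2$), and that the conductor-discriminant relation makes the two-to-one count come out right there too — in particular that a cubic field ramified at $3$ really does occur, and occurs for exactly the two values $3D'$ and $3^2 D'$. Everything else is the elementary linear algebra of lines in $\Ff_3^r$ versus nonzero vectors up to sign, together with the already-quoted class field theory dictionary, so I would present the $3$-part verification carefully and treat the rest as a short combinatorial identification.
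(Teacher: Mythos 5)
Your proposal is correct and follows essentially the same route as the paper: reduce via the class field theory dictionary to $\Z/3\Z$-subgroups of $\mathcal{C}\ell_m(\Q)[3]$, then match each such subgroup to its two generators, read off as exponent vectors giving the two cube-free $3$-split integers $D$ and $D^2$ (mod cubes). You supply more detail than the paper does — in particular you flag the need to exclude $D=1$ (the trivial extension) and you spell out the compatibility of the $\delta_m$ factor with the prime $3$ appearing in $D$, both of which the paper passes over silently — but the core argument is the same.
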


\begin{proof}

As was stated above there is a one-to-one correspondence between $\Z/3\Z$ subgroups of $\mathcal{C}\ell_m(\Q)[3]$ and cubic Galois extensions of $\Q$ unramified outisde of the primes dividing $m$. There is a one-to-two correspondence between such subgroups and non-zero elements of $$\mathcal{C}\ell_m(\Q)[3] = \left(\Z/3\Z\right)^{\delta_m}\times\prod_{\substack{p|m \\ p\equiv 1 \mod{3}}} \Z/3\Z.$$
Let $e_p$ be the coordinates of a element in $\mathcal{C}\ell_m(\Q)[3]$. Now we construct the cube-free $3$-split integer as
$$D := \prod_{\substack{p|m \\ p\equiv 0,1\mod{3}}}p^{e_p}.$$
This correspondence is one-to-two since there are two generators for each subgroup.

\end{proof}

\begin{cor}\label{classifycor}
Let $D_1$ and $D_2$ be two distinct cube-free $3$-split integers. Then they correspond to the same cubic Galois extension of $\Q$ if and only if there exists a $D\in\Q$ such that $D_2=D_1^2D^3$.

\end{cor}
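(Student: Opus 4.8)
The plan is to leverage Lemma \ref{classifylem} directly. Both $D_1$ and $D_2$ are cube-free $3$-split integers; set $m = \lcm(D_1, D_2)$ (or simply any $m$ divisible by both, e.g. $m = D_1 D_2$, noting $9 \mid m$ is allowed to vary harmlessly) so that $\supp(D_1), \supp(D_2) \subset \supp(m)$. Then both $D_1$ and $D_2$ live in the two-to-one correspondence of Lemma \ref{classifylem}, and tracing through its proof, each $D_i$ corresponds to a nonzero element $(e_p^{(i)})_p$ of $\mathcal{C}\ell_m(\Q)[3] = (\Z/3\Z)^{\delta_m} \times \prod_{p \mid m,\, p \equiv 1 (3)} \Z/3\Z$ via $D_i = \prod_{p} p^{e_p^{(i)}}$, and $D_1, D_2$ give the same cubic field if and only if the corresponding elements generate the same $\Z/3\Z$ subgroup, i.e. differ by multiplication by a unit of $\Z/3\Z$, namely $(e_p^{(2)})_p = 2 \cdot (e_p^{(1)})_p$ in $(\Z/3\Z)^{\text{(...)}}$ (the only nontrivial option, since $1$ gives $D_1 = D_2$).

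Next I would translate the condition ``$(e_p^{(2)}) \equiv 2(e_p^{(1)}) \pmod 3$ for all $p$'' into the multiplicative statement. For each prime $p$, we have $e_p^{(2)} \equiv 2 e_p^{(1)} \equiv -e_p^{(1)} \pmod 3$, so $e_p^{(2)} = 2 e_p^{(1)} - 3 k_p$ for some integer $k_p$ (with the exponents in $\{0,1,2\}$ this just pins down $k_p$). Therefore
\begin{equation*}
D_2 = \prod_p p^{e_p^{(2)}} = \prod_p p^{2 e_p^{(1)}} \cdot \prod_p p^{-3 k_p} = D_1^2 \cdot \left(\prod_p p^{-k_p}\right)^3,
\end{equation*}
so taking $D = \prod_p p^{-k_p} \in \Q$ gives $D_2 = D_1^2 D^3$, which is the forward direction.

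For the converse, suppose $D_2 = D_1^2 D^3$ with $D \in \Q$. Write $D = \prod_p p^{v_p}$ with $v_p \in \Z$ (finitely many nonzero). Then $e_p^{(2)} = \ord_p(D_2) = 2 e_p^{(1)} + 3 v_p$, hence $e_p^{(2)} \equiv 2 e_p^{(1)} \pmod 3$ for every $p$; in particular if $p \notin \supp(D_1)$ then $p \mid D_2$ forces $3 \mid e_p^{(2)}$, but $D_2$ is cube-free so $e_p^{(2)} = 0$, meaning $\supp(D_2) \subseteq \supp(D_1)$ and symmetrically $\supp(D_1) \subseteq \supp(D_2)$, so both share the same support set $S \subset \supp(m)$. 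Thus the element of $\mathcal{C}\ell_m(\Q)[3]$ attached to $D_2$ is exactly twice that attached to $D_1$, so they generate the same $\Z/3\Z$ subgroup and hence the same cubic Galois extension; since $D_1 \neq D_2$ the factor $2$ is forced (the factor $1$ would give equality), and everything is consistent.

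The only mild subtlety — not really an obstacle — is bookkeeping at the primes $p = 3$ and at primes dividing $m$ but not $\equiv 1 \pmod 3$ other than $3$: by the $3$-split hypothesis the only such prime that can appear is $3$ itself, and it contributes to the $(\Z/3\Z)^{\delta_m}$ factor exactly when $9 \mid m$; one should check that the exponent of $3$ in a cube-free $3$-split $D$ is $0$, $1$, or $2$ and that the same residue-mod-$3$ argument applies there. Once this is verified the computation above goes through verbatim, and there is no genuine difficulty beyond matching the combinatorial description of Lemma \ref{classifylem} with the multiplicative relation $D_2 = D_1^2 D^3$.
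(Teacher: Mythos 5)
Your proof is correct and follows essentially the same route as the paper: invoking Lemma \ref{classifylem}, translating ``same cubic field'' into ``the exponent vectors generate the same $\Z/3\Z$ subgroup of $\mathcal{C}\ell_m(\Q)[3]$,'' reducing to $e_{p,2}\equiv 2e_{p,1}\pmod 3$ for all $p$ (since $D_1\ne D_2$ rules out the trivial unit), and producing the explicit $D=\prod_p p^{(e_{p,2}-2e_{p,1})/3}$. You simply spell out the converse direction and the support bookkeeping in more detail than the paper does.
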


\begin{proof}

Let
$$D_i = \prod p^{e_{p,i}}$$
be the prime factorization of $D_i$, $i=1,2$. Then by the proof of Lemma \ref{classifylem}, we see that $D_1$ and $D_2$ correspond to the same cubic extension of $\Q$ if and only if the vectors $(e_{p,1})$ and $(e_{p,2})$ generate the same subgroup in $\mathcal{C}\ell_m(\Q)[3]$ where $m$ is any positive integer such that $\supp(D_1)\cup\supp(D_2)\subset\supp(m)$. Since $D_1\not=D_2$, this is if and only if $e_{p,2} \equiv 2 e_{p,1} \mod{3}$ for all primes $p$. Setting
$$D = \prod p^{\frac{e_{p,2}-2e_{p,1}}{3}}$$
suffices.

\end{proof}

\subsection{Explicit Correspondence}

In this section, we will construct an explicit correspondence between cube-free $3$-split integers and cubic Galois extensions of $\Q$.

Let $\zeta_3$ be a primitive cubic root of unity and denote $K = \Q(\zeta_3)$. The following are well known facts about the cyclotomic field $K$.

\begin{lem}\label{knownlem}

\begin{enumerate}
\item The only ramified prime in $K$ is $3$ and a prime $p$ splits if $p\equiv 1 \mod{3}$ and is inert if $p\equiv 2 \mod{3}$.
\item $\mathcal{O}_K = \Z[\zeta_3]$ is a PID.
\item $\mathcal{O}_K^* = \{\pm1, \pm\zeta_3, \pm\zeta_3^2\}$
\item $K/Q$ is Galois with $\Gal(K/\Q) = \Z/2\Z$
\end{enumerate}

\end{lem}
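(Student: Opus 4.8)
These four assertions are classical; the plan is to reduce all of them to the single observation that $K=\Q(\zeta_3)$ is the imaginary quadratic field $\Q(\sqrt{-3})$ (via $\zeta_3=\tfrac{-1+\sqrt{-3}}{2}$), and then dispatch the points one by one. Part $(4)$ is immediate: $K$ is the splitting field over $\Q$ of $x^3-1$, hence of its irreducible cyclotomic factor $\Phi_3(x)=x^2+x+1$, so $K/\Q$ is normal and separable of degree $2$, whence Galois with $\Gal(K/\Q)\cong\Z/2\Z$.

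For part $(2)$ I would first record that $\mathcal{O}_K=\Z[\zeta_3]$ --- the special case $n=3$ of $\mathcal{O}_{\Q(\zeta_n)}=\Z[\zeta_n]$, or equivalently the fact that $-3\equiv 1\pmod 4$ makes $\Z[\tfrac{-1+\sqrt{-3}}{2}]$ the maximal order --- and then show this ring (the Eisenstein integers) is Euclidean for the norm form $N(a+b\zeta_3)=a^2-ab+b^2$: given $\alpha,\beta\in\Z[\zeta_3]$ with $\beta\neq 0$, the quotient $\alpha/\beta\in\C$ lies within Euclidean distance $<1$ of some $\gamma\in\Z[\zeta_3]$, since the covering radius of the hexagonal lattice $\Z[\zeta_3]$ equals $1/\sqrt 3<1$; then $N(\alpha-\gamma\beta)=|\alpha/\beta-\gamma|^2\,N(\beta)<N(\beta)$. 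A Euclidean domain is a PID. (One could equally just cite $h(\Q(\sqrt{-3}))=1$.)

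For part $(3)$ I would appeal to Dirichlet's unit theorem: an imaginary quadratic field has unit rank $0$, so $\mathcal{O}_K^*$ consists exactly of the roots of unity in $K$. The inclusion $\{\pm1,\pm\zeta_3,\pm\zeta_3^2\}=\langle-\zeta_3\rangle\subseteq\mathcal{O}_K^*$ is clear, and conversely a primitive $n$-th root of unity in $K$ forces $\phi(n)\mid[K:\Q]=2$, so $n\in\{1,2,3,4,6\}$, with $n=4$ ruled out because $\Q(i)\neq\Q(\sqrt{-3})$; hence equality. For part $(1)$ I would use that $\mathrm{disc}(K)=-3$, so $3$ is the only ramified prime --- explicitly $(3)=(1-\zeta_3)^2$, since $(1-\zeta_3)(1-\zeta_3^2)=\Phi_3(1)=3$ and $1-\zeta_3^2=-\zeta_3^2(1-\zeta_3)$ --- while for $p\neq 3$ Dedekind's factorization criterion (valid as $\mathcal{O}_K=\Z[\zeta_3]$) makes the decomposition of $p$ mirror that of $\Phi_3$ modulo $p$. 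The polynomial $\Phi_3$ has a root in $\Ff_p$ precisely when $\Ff_p^*$ contains an element of order $3$, i.e. when $p\equiv 1\pmod 3$, in which case it has two distinct roots (a repeated root would force $p\mid 3$) and $p$ splits; when $p\equiv 2\pmod 3$, $\Phi_3$ is irreducible mod $p$ and $p$ is inert.

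I do not expect any genuine obstacle here: the only real computations are the covering-radius estimate underlying the Euclidean property in $(2)$ and the elementary reductions modulo $p$ in $(1)$. If one wishes to avoid even these, $(2)$ follows from the class number of $\Q(\sqrt{-3})$ being $1$, and the splitting law in $(1)$ is a special case of the general cyclotomic decomposition law, under which a prime $p\nmid 3$ splits in $\Q(\zeta_3)$ into $\phi(3)/f$ primes with $f=\ord_3(p)$.
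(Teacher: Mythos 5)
Your proof is correct, and every step holds up: the covering radius $1/\sqrt 3$ of the Eisenstein lattice does give the Euclidean property; $-\zeta_3$ does generate a cyclic group of order $6$ and the $\phi(n)\mid 2$ constraint together with $\Q(i)\neq K$ does pin the torsion down; the discriminant $-3$ and Dedekind's criterion do give the splitting law; and the Galois claim is immediate from $K$ being the splitting field of $\Phi_3$. The paper itself offers no argument here --- it simply lists these four items as ``well known facts about the cyclotomic field $K$'' and moves on --- so there is no ``paper's approach'' to compare against; you have supplied a proof where the source supplies a citation. Your choices are all the standard ones (Euclidean-for-the-norm rather than citing $h(\Q(\sqrt{-3}))=1$, Dirichlet plus a torsion bound for the units, Dedekind plus the order-of-$p$-mod-$3$ computation for splitting), so a reader wanting a self-contained treatment would be well served by this. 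One small organizational point worth making explicit: your Dedekind-criterion step in $(1)$ uses $\mathcal{O}_K=\Z[\zeta_3]$, which you establish at the start of $(2)$, so the logical order of the argument is really $(4)$, $(2)$ (or at least its first sentence), $(3)$, $(1)$ rather than the numbered order of the lemma; you flag this parenthetically, but it is the one dependency a careful reader should notice.
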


Denote $\mathfrak{P}_3$ as the unique prime dividing $3$ in $\mathcal{O}_K$. Hence $3\mathcal{O}_K = \mathfrak{P}_3^2$. Moreover, denote $\sigma$ as the unique generator of $\Gal(K/\Q)$.

\begin{lem}\label{3-factorization}

Let $D$ be a $3$-split integer. Then there exists $D_1,D_2\in\mathcal{O}_K$ such that $D=\pm D_1D_2$, $\sigma(D_1)=D_2$ and $\gcd(D_1,D_2) = \mathfrak{P}_3^{v_3(D)}$.

\end{lem}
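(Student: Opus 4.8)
The plan is to build $D_1$ one prime at a time, using the splitting behavior in $K=\Q(\zeta_3)$ recorded in Lemma \ref{knownlem}, and then to fix up the unit ambiguity at the very end. Since $D$ is $3$-split, write $D = 3^{v_3(D)}\prod_i p_i$ where each $p_i\equiv 1\Mod 3$ (the primes are distinct because $D$ is treated as a product of prime powers, but powers cause no extra difficulty — we just handle each prime factor with multiplicity). For a prime $p\equiv 1\Mod 3$, part (1) of Lemma \ref{knownlem} says $p$ splits in $\mathcal O_K$, and part (2) says $\mathcal O_K$ is a PID, so we may write $p\mathcal O_K = \mathfrak p\overline{\mathfrak p}$ with $\mathfrak p = (\pi_p)$, $\overline{\mathfrak p} = (\sigma(\pi_p))$ for some $\pi_p\in\mathcal O_K$; note $\mathfrak p\neq\overline{\mathfrak p}$ since $p$ splits, so $\pi_p$ and $\sigma(\pi_p)$ are coprime. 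For the prime $3$, part (1) gives $3\mathcal O_K = \mathfrak P_3^2$ with $\mathfrak P_3 = (\pi_3)$ principal (again by the PID property), and since $\mathfrak P_3$ is the unique prime above $3$ it is fixed by $\sigma$, so $\sigma(\pi_3) = u\pi_3$ for some unit $u\in\mathcal O_K^*$.

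Now set $A := \pi_3^{\,v_3(D)}\prod_i \pi_{p_i}$ and $B := \sigma(A) = \sigma(\pi_3)^{\,v_3(D)}\prod_i\sigma(\pi_{p_i})$. By construction $AB$ generates the ideal $\mathfrak P_3^{2v_3(D)}\prod_i\mathfrak p_i\overline{\mathfrak p_i} = 3^{v_3(D)}\prod_i p_i\,\mathcal O_K = D\mathcal O_K$, so $AB = wD$ for some unit $w\in\mathcal O_K^*$. Here I would use part (3), $\mathcal O_K^* = \{\pm 1,\pm\zeta_3,\pm\zeta_3^2\}$: every unit is of the form $\pm\zeta_3^k$, and since $\zeta_3 = -\overline{\zeta_3}\cdot(-1)$... more to the point, each unit $\zeta_3^k$ is a cube times $\pm 1$? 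No — $\zeta_3$ is not a cube in $\mathcal O_K$. The right move is instead: since $w$ is a root of unity, write $w = \zeta_6^{j}$ (as $\mathcal O_K^* = \mu_6$), and absorb a sixth root of unity $\eta$ with $\eta\sigma(\eta) = \zeta_6^{j}$ — such $\eta$ exists because the norm map $\mu_6\to\mu_6$, $\eta\mapsto\eta\sigma(\eta) = \eta\cdot\overline\eta = |\eta|^2 = 1$ is trivial, so we actually need $\zeta_6^j$ to already be a norm, i.e. $\zeta_6^j = 1$. So the cleaner claim is: $w\sigma(w) = \sigma(AB)/\,\cdot$ — compute $\sigma(AB) = BA = AB$, hence $\sigma(wD) = wD$, hence $\sigma(w) = w$ (as $D\in\Z$), hence $w\in\Q\cap\mathcal O_K^* = \{\pm 1\}$. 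Therefore $AB = \pm D$, and setting $D_1 := A$, $D_2 := \sigma(A) = B$ gives $D = \pm D_1 D_2$ with $\sigma(D_1) = D_2$.

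It remains to compute $\gcd(D_1,D_2)$. Away from $3$, the factors $\pi_{p_i}$ of $D_1$ and $\sigma(\pi_{p_i})$ of $D_2$ generate the coprime primes $\mathfrak p_i,\overline{\mathfrak p_i}$, so they contribute nothing to the gcd; thus $\gcd(D_1,D_2)\mathcal O_K$ is supported only at $\mathfrak P_3$. At $\mathfrak P_3$: $D_1$ has $\mathfrak P_3$-valuation exactly $v_3(D)$ (from the $\pi_3^{v_3(D)}$ factor, the other factors being coprime to $3$), and likewise $D_2 = \sigma(D_1)$ has $\mathfrak P_3$-valuation $v_3(D)$ since $\sigma$ fixes $\mathfrak P_3$. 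Hence $\gcd(D_1,D_2) = \mathfrak P_3^{v_3(D)}$, completing the proof.

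The main obstacle, as the attempted computation above shows, is pinning down the unit $w$: one must argue carefully that after forming $A = \sigma(A)$'s partner the leftover unit is forced into $\{\pm1\}$ rather than a genuine sixth root of unity. The Galois-descent argument $\sigma(w) = w \Rightarrow w\in\Q$ handles this cleanly, but it relies on the fact — guaranteed by $\mathfrak P_3$ being the \emph{unique} prime over $3$ together with $\mathcal O_K$ being a PID — that the $\pi_3$ we chose can be taken with $\sigma(\pi_3)$ differing from $\pi_3$ only by a unit, which is exactly what makes $\sigma(A)$ have the same ideal factorization as $A$ up to units. I would also remark that the $\pm$ and the choice of $\pi_p$ up to units is precisely the source of the two-to-one phenomenon in Corollary \ref{classifycor}, so no more canonicity than stated should be expected.
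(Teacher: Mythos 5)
Your proof is correct and takes essentially the same route as the paper: construct $D_1$ from generators of the primes above $3$ and the split primes, set $D_2 = \sigma(D_1)$, observe that $D_1D_2$ and $D$ generate the same ideal and are both $\sigma$-fixed, so the leftover unit is $\sigma$-fixed and hence lies in $\{\pm 1\}$; the gcd computation is also identical. The false-start digression about absorbing the unit via the norm map $\mu_6\to\mu_6$ is correctly abandoned in favor of the Galois-descent argument, which is exactly what the paper uses.
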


\begin{proof}

Since $D$ is $3$-split, we can write
$$D= 3^{e_3} \prod_{\substack{p|D \\ p\not=3}} p^{e_p}$$
where all the primes appearing in the product have the property that $p\equiv 1 \mod{3}$ and hence split in $K$. That is, we can write
$$p\mathcal{O}_K = \mathfrak{P}_1\mathfrak{P}_2$$
where $\mathfrak{P}_1^{\sigma} = \mathfrak{P}_2$.

Define
$$\mathscr{D}_i := \prod_{\substack{p|D \\ p\not=3}} \mathfrak{P}_i^{e_p}.$$
Since $\mathcal{O}_K=\Z[\zeta_3]$ is a PID, we can find $D'_i$ such that $\mathscr{D}_i=(D'_i)$. Moreover, since $\mathscr{D}_1^{\sigma} = \mathscr{D}_2$, we may assume $\sigma(D'_1)=D'_2$. Now, we notice that $3=(1-\zeta_3)(1-\bar{\zeta}_3)$. Define
$$D_1 = (1-\zeta_3)^{e_3}D'_1 \quad \quad \quad D_2 = (1-\bar{\zeta}_3)^{e_3}D'_2.$$
Then $\sigma(D_1)=D_2$ and $D\mathcal{O}_K =(D_1D_2)$. Therefore $D=uD_1D_2$ for some unit $u$ of $\mathcal{O}_K$. However, since both $D$ and $D_1D_2$ are fixed by $\sigma$, we see that $u$ is also fixed by $\sigma$ so $u=\pm1$.

Finally, we remark that $\gcd(D'_1,D'_2)=1$ and $\mathfrak{P}_3 = (1-\zeta_3)\mathcal{O}_K = (1-\bar{\zeta_3})\mathcal{O}_K$.

\end{proof}

\begin{defn}

For any $3$-split integer, we will call the factorization $D=\pm D_1D_2$ as in Lemma \ref{3-factorization} its \textbf{$3$-split factorization}.

\end{defn}

\begin{rem}\label{3splitrem}

The $3$-split factorization of an integer is not unique. It depends on choices of primes $\mathfrak{P}\in\mathcal{O}_K$ dividing $3$-split primes $p\in \Z$. As we will see the classification depends on the choice of factorization of the $3$-split primes in $\mathcal{O}_K$. However when we count such extensions this choice will not matter. Therefore, for every $3$-split prime, we will fix a prime in $\mathcal{O}_K$ dividing it and, consequently, fix a $3$-split factorization of all $3$-split integers.

\end{rem}

\begin{lem}\label{3splitfact}

For any $3$-split integer $D$ with $3$-split factorization $D=\pm D_1D_2$, the extension $K'_D:= \Q(\zeta_3,\sqrt[3]{D_1D_2^2})$ is a Galois extension of $\Q$ with Galois group $\Z/6\Z$.

\end{lem}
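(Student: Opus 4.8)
The plan is to exhibit $K'_D = \Q(\zeta_3,\sqrt[3]{D_1D_2^2})$ as a compositum of two Galois subfields whose Galois groups are $\Z/2\Z$ and $\Z/3\Z$ with trivial intersection, and then invoke the standard fact that such a compositum is Galois with Galois group the direct product. Concretely, set $\alpha := \sqrt[3]{D_1D_2^2}$ and $L := \Q(\zeta_3,\alpha)$; I will show $L/\Q$ is Galois of degree $6$ with group $\Z/2\Z \times \Z/3\Z \cong \Z/6\Z$.

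First I would check that $[L:\Q] = 6$. We already have $[K:\Q]=2$ from Lemma \ref{knownlem}(4). Since $K$ contains the cube roots of unity, $L/K$ is a Kummer extension, and $[L:K] = 3$ precisely when $D_1D_2^2$ is not a cube in $K^*$ (and otherwise it is $1$). Here I would use the structure coming from Lemma \ref{3-factorization}: the ideal $(D_1D_2^2) = \mathfrak{P}_3^{3v_3(D)}\prod_{p \mid D,\, p \neq 3}\mathfrak{P}_1^{e_p}\mathfrak{P}_2^{2e_p}$, and because $D$ is cube-free at least one exponent $e_p$ with $p \neq 3$ satisfies $e_p \in \{1,2\}$, so $\mathfrak{P}_1^{e_p}\mathfrak{P}_2^{2e_p}$ is not the cube of an ideal; hence $D_1D_2^2$ is not a cube in $K^*$ and $[L:K]=3$. (If $D$ is a power of $3$ only, a short separate check using $\mathfrak{P}_3$-adic valuations is needed; this is the one slightly fiddly case.) Thus $[L:\Q]=6$.

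Next, $L/\Q$ is Galois: it is the splitting field over $\Q$ of $x^3 - D_1D_2^2$, since $\zeta_3 \in L$ means all three roots $\alpha,\zeta_3\alpha,\zeta_3^2\alpha$ lie in $L$; and $x^3 - D_1D_2^2$ together with the minimal polynomial of $\zeta_3$ generate $L$. To identify the group as $\Z/6\Z$ rather than $S_3$, the key computation is that $\Gal(L/\Q)$ is abelian, equivalently that the $\Z/2\Z = \Gal(K/\Q)$ acts trivially by conjugation on $\Gal(L/K) \cong \Z/3\Z$. This is where the specific choice $D_1D_2^2$ (as opposed to, say, $D_1$ alone) matters: I would compute $\sigma(\alpha)/\alpha$ up to cube roots of unity. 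Since $\sigma(D_1) = D_2$ by Lemma \ref{3-factorization}, we get $\sigma(D_1D_2^2) = D_2D_1^2 = D_1^3(D_1D_2^2)^{-1}\cdot D_1^{-1}\cdot\ldots$ — more cleanly, $(D_1D_2^2)(D_2D_1^2) = (D_1D_2)^3$, so $\sigma(D_1D_2^2) = (D_1D_2)^3/(D_1D_2^2)$, whence $\sigma(\alpha) = \zeta_3^{j}\, D_1D_2/\alpha$ for some $j$. Therefore conjugation by $\sigma$ sends a generator $\tau$ of $\Gal(L/K)$ (acting by $\alpha \mapsto \zeta_3\alpha$) to $\tau^{-1}$ composed with... — carrying this out shows $\sigma\tau\sigma^{-1} = \tau^{\pm1}$, and the sign works out to $+1$ because $\sigma$ also inverts $\zeta_3$, the two inversions cancelling. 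Hence the action is trivial, the group is abelian of order $6$, so $\Z/6\Z$.

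The main obstacle is the last step: verifying that the two "inversions" (of the Kummer radical under $\sigma$, and of $\zeta_3$ under $\sigma$) genuinely cancel, so that $\Gal(L/\Q)$ is cyclic and not $S_3$. This is precisely the reason the lemma uses the exponent pattern $(1,2)$ on $(D_1,D_2)$ — it is the unique choice (up to swapping) making $D_1D_2^2$ "anti-invariant" under $\sigma$ modulo cubes and modulo the norm, which is what forces the abelian Galois group. Once this cocycle/conjugation computation is pinned down, the rest is the routine degree count and splitting-field observation above.
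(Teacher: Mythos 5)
Your proof follows essentially the same route as the paper's: reduce to Kummer theory for $K'_D/K$, extend $\sigma$ from $\Gal(K/\Q)$ using $\sigma(D_1D_2^2)=D_1^2D_2$, and then verify that $\sigma$ and $\tau$ commute so that the degree-$6$ group is $\Z/6\Z$ rather than $S_3$. Two remarks on the details. First, you are more explicit than the paper about $[K'_D:K]=3$ (the paper simply asserts that Kummer theory gives Galois group $\Z/3\Z$), and your ideal-valuation argument is the right idea; but note the paper's lemma as stated has no cube-free hypothesis, so some hypothesis on $D$ really is needed here, and the case $D$ a pure power of $3$ is genuinely not covered by your factorization criterion (one has to compute the unit in $D_1D_2^2 = u(1-\zeta_3)^{3e_3}$ and check it is not a cube of a unit). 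Second, and more importantly, the step you flag as "the main obstacle" — showing $\sigma\tau\sigma^{-1}=\tau$ rather than $\tau^{-1}$ — is exactly where the proof lives, and the paper does carry it out: writing $\alpha=\sqrt[3]{D_1D_2^2}$ and using $\sigma(\alpha)=\alpha^2/D_2$ (a valid choice of lift), one computes directly that $\sigma\tau(\alpha)=\bar{\zeta}_3\alpha^2/D_2=\tau\sigma(\alpha)$; your "two inversions cancel" heuristic is correct but you should finish this two-line check, and also note that the answer is independent of which cube root you take for $\sigma(\alpha)$ (if $\sigma(\alpha)=\zeta_3^k\alpha^2/D_2$ the same conclusion holds for any $k$). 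Finally, your opening "plan" to present $K'_D$ as a compositum of two $\Q$-Galois subfields is not what your argument actually does (and showing the cubic subfield is Galois over $\Q$ would be circular here); the argument you actually carry out — degree count, splitting field, conjugation — is the right one and matches the paper.
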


\begin{proof}

By Kummer theory we have that $K'_D$ is a Galois extension of $K$ with Galois group $\Z/3\Z$ (since $\mu_3 \subset K$). Let $\tau$ be a generator of $\Gal(K'_D/K)$ such that $\tau\left(\sqrt[3]{D_1D_2^2}\right) = \zeta_3\sqrt[3]{D_1D_2^2}$ and let $\sigma$ be the generator of $\Gal(K/\Q)$ as above.

We know that $\sigma(D_1D_2^2) = D_1^2D_2$ and so, up to a choice of cube root of $D_1^2D_2$, we get $\sigma\left(\sqrt[3]{D_1D_2^2}\right) = \sqrt[3]{D_1^2D_2}$. Therefore, $K'_D$ is a Galois extension of $\Q$.

Thus, $\sigma$ is an element of order $2$ and $\tau$ is an element of order $3$ in $\Gal(K'_D/\Q)$. Hence, $\sigma$ and $\tau$ generate $\Gal(K'_D/\Q)$ since $[K'_D:\Q]=6$. So it remains to show that $\sigma$ and $\tau$ commute.

Clearly $\sigma\tau(\zeta_3)=\tau\sigma(\zeta_3)$ since $\tau$ fixes $K$. Now,
$$\sigma\tau\left(\sqrt[3]{D_1D_2^2}\right) = \sigma\left(\zeta_3\sqrt[3]{D_1D_2^2}\right) = \bar{\zeta}_3\sqrt[3]{D_1^2D_2}$$
and
$$\tau\sigma\left(\sqrt[3]{D_1D_2^2}\right) = \tau\left(\sqrt[3]{D_1^2D_1}\right) = \tau\left(\frac{\sqrt[3]{D_1D_2^2}^2}{D_2}\right) = \frac{\zeta_3^2\sqrt[3]{D_1D_2^2}^2}{D_2} = \bar{\zeta_3}\sqrt[3]{D_1^2D_2}.$$

Therefore, $\sigma$ and $\tau$ commute and $\Gal(K'_D/\Q)=\Z/6\Z$ as claimed.

\end{proof}

Let $H=\{1,\sigma\}\subset \Gal(K'_D/\Q)$ and let $K_D = (K'_D)^H$ be the fixed field of $H$. Then
\begin{align}\label{KD}
K_D = \Q\left(\sqrt[3]{D_1D_2^2} + \sqrt[3]{D_1^2D_2}\right)
\end{align}
is Galois with $\Gal(K_D/\Q) = \Z/3\Z$.

\begin{lem}\label{explcorrlem}

Let $D_1,D_2$ be distinct $3$-split integers then $K_{D_1}=K_{D_2}$ if and only if there exists a $D\in\Q$ such that $D_2=D_1^2D^3$.

\end{lem}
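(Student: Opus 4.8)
The plan is to lift everything to the sextic fields $K'_{D_i}$ of Lemma~\ref{3splitfact} and then run a Kummer‑theory argument over $K=\Q(\zeta_3)$. Given a $3$-split integer $D$, write its fixed $3$-split factorization (Remark~\ref{3splitrem}) as $D=\pm D^{(1)}D^{(2)}$ — the superscripts only to avoid collision with the $D_1,D_2$ of the statement — and, following Corollary~\ref{classifycor}, assume $D_1,D_2$ are cube-free. First I would note that $K_{D_1}=K_{D_2}$ if and only if $K'_{D_1}=K'_{D_2}$: indeed $K'_D=K_D(\zeta_3)$, which gives one direction, and for the other one uses that by Lemma~\ref{3splitfact} the group $\Gal(K'_D/\Q)\cong\Z/6\Z$ has a unique subgroup of order $2$, whose fixed field is $K_D$, so $K_D$ is recovered from $K'_D$. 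Next, $K'_D=K\big(\sqrt[3]{\kappa(D)}\big)$ with $\kappa(D):=D^{(1)}(D^{(2)})^2$; since $\mu_3\subset K$, Kummer theory identifies the cyclic cubic extensions of $K$ with the order-$3$ subgroups of $K^*/(K^*)^3$, so $K'_{D_1}=K'_{D_2}$ holds precisely when $\kappa(D_1)$ and $\kappa(D_2)$ generate the same subgroup, i.e. when $\kappa(D_2)\equiv\kappa(D_1)$ or $\kappa(D_2)\equiv\kappa(D_1)^2\pmod{(K^*)^3}$.

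The core of the argument is then to compute the class of $\kappa(D)$ in $K^*/(K^*)^3$ explicitly. Using that $\mathcal O_K=\Z[\zeta_3]$ is a PID with unit group $\mathcal O_K^*=\mu_6$ (Lemma~\ref{knownlem}), one gets a canonical isomorphism $K^*/(K^*)^3\cong(\Z/3\Z)\langle\zeta_3\rangle\oplus\bigoplus_{\mathfrak p}\Z/3\Z$, the sum over the primes of $\mathcal O_K$. For each rational prime $p\equiv1\pmod3$ fix the generator $\pi_p$ of the chosen prime $\mathfrak p_p\mid p$; then $\pi_p\sigma(\pi_p)=p$, and $D^{(1)}=(1-\zeta_3)^{v_3(D)}\prod_p\pi_p^{v_p(D)}$, $D^{(2)}=\sigma(D^{(1)})$. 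Using $(1-\zeta_3)(1-\bar\zeta_3)=3$ and the identity $(1-\bar\zeta_3)^2=-3\bar\zeta_3$, which rearranges to $3(1-\bar\zeta_3)=\zeta_3(1-\zeta_3)^3$, a short computation yields $\kappa(D)\equiv\zeta_3^{\,v_3(D)}\prod_p\pi_p^{\,v_p(D)}\,\sigma(\pi_p)^{\,2v_p(D)}\pmod{(K^*)^3}$, the product over rational primes $p\equiv1\pmod3$. Hence the $\langle\zeta_3\rangle$-coordinate of $\kappa(D)$ is $v_3(D)\bmod3$ and its $\mathfrak p_p$-coordinate is $v_p(D)\bmod3$, so $\kappa$ determines the family $(v_p(D)\bmod3)_p$ and therefore induces an injective homomorphism from the group of $3$-split rationals modulo cubes into $K^*/(K^*)^3$.

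The conclusion is now formal: since $\kappa$ is an injective homomorphism, $K'_{D_1}=K'_{D_2}$ holds iff $\kappa(D_2)\in\{\kappa(D_1),\kappa(D_1^2)\}$, iff $D_2\equiv D_1$ or $D_2\equiv D_1^2\pmod{(\Q^*)^3}$; for cube-free positive integers the first option means $D_1=D_2$, which is excluded, so $K_{D_1}=K_{D_2}$ iff $D_2\equiv D_1^2\pmod{(\Q^*)^3}$, i.e. iff $D_2=D_1^2D^3$ for some $D\in\Q$. I expect the only real obstacle to be the explicit evaluation of $\kappa(D)$ in the second paragraph: one must track the fixed choices of Remark~\ref{3splitrem} so that no stray units creep into $D^{(1)}$, and it is precisely the unit factor $\zeta_3^{v_3(D)}$ coming from the powers of $1-\zeta_3$ that supplies the congruence at the prime $3$ — comparing valuations alone only controls the primes $p\neq3$. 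A more structural but less self-contained alternative would be to show directly that $K_D$ is the cyclic cubic field attached to $D$ by the class field theory correspondence of Lemma~\ref{classifylem} (it is cyclic cubic and unramified outside $\supp(D)$, and one matches the ramification/Frobenius data prime by prime) and then quote Corollary~\ref{classifycor}; but making the match between the Kummer generator and the idelic correspondence is itself a computation of similar length, so I would go with the argument above.
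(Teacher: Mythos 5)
Your proof is correct and follows the same core strategy as the paper: reduce to the sextic fields $K'_{D_i}$, apply Kummer theory over $K=\Q(\zeta_3)$, and compare the resulting Kummer generators modulo $(K^*)^3$. The difference is in the bookkeeping. The paper matches $\mathfrak{P}$-adic valuations of the Kummer generators prime by prime, with a separate (and rather terse) computation at $\mathfrak{P}_3$; you instead set up the decomposition $K^*/(K^*)^3\cong(\Z/3\Z)\langle\zeta_3\rangle\oplus\bigoplus_{\mathfrak p}\Z/3\Z$ up front and compute all coordinates of $\kappa(D)$ at once, with the identity $3(1-\bar\zeta_3)=\zeta_3(1-\zeta_3)^3$ shifting the prime-$3$ data cleanly into the $\langle\zeta_3\rangle$-component. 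This buys you two things the paper's write-up glosses over: (a) both directions of the equivalence come at once from injectivity of $D\mapsto\kappa(D)$ on $3$-split rationals modulo cubes, and (b) you explicitly record and use cube-freeness of $D_1,D_2$, which is needed to exclude the Kummer-theoretic alternative $\kappa(D_2)\equiv\kappa(D_1)\pmod{(K^*)^3}$ — a case the paper silently drops when it writes \eqref{explproofeq} (without cube-freeness the lemma as stated is in fact false, e.g. $D_1=7$, $D_2=7^4$). The unit-tracking worry you flag at the end is real but harmless: since $K$ is imaginary quadratic, $\pi_p\sigma(\pi_p)=N(\pi_p)=p$ for any generator, so once one \emph{defines} $D^{(1)}:=(1-\zeta_3)^{v_3(D)}\prod_p\pi_p^{v_p(D)}$ for a fixed system of generators $\pi_p$ (a mild sharpening of the choice in Remark \ref{3splitrem}), the only unit that enters is the $\zeta_3^{v_3(D)}$ you already account for.
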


\begin{proof}

Since $K'_{D_i} = K_{D_i}(\zeta_3)$ and $K_{D_i}=(K'_{D_i})^H$ we have $K_{D_1}=K_{D_2}$ if and only if $K'_{D_1}=K'_{D_2}$.

Let $D_1=\pm D_{1,1}D_{1,2}$, $D_2=\pm D_{2,1}D_{2,2}$ be the $3$-split factorization of $D_1$ and $D_2$. Then Kummer Theory applied to $K$ tells us that $K'_{D_1}=K'_{D_2}$ if and only if there exists $E\in K^*$ such that
\begin{align}\label{explproofeq}
D_{2,1}D_{2,2}^2 = D_{1,2}D_{1,1}^2E^3.
\end{align}
Let $p\not=3$ be a prime divisor of $D_2$ and let $\mathfrak{P}$ be a prime lying above it in $\mathcal{O}_K$. First suppose $\mathfrak{P}|D_{2,1}$. Then $\mathfrak{P}\nmid D_{2,2}$ and hence $v_{\mathfrak{P}}(D_{2,1}) = v_p(D_2)$. Moreover, from how we construct the $3$-split factorizations in Lemma \ref{3splitfact} we have that $\mathfrak{P} \nmid D_{1,2}$ and hence $v_{\mathfrak{P}}(D_{1,1}) = v_p(D_1)$.

Putting this together with \eqref{explproofeq} we get
$$v_P(D_2) = v_{\mathfrak{P}}(D_{2,1}D_{2,2}^2) = v_{\mathfrak{P}}(D_{1,2}D_{1,1}^2E^3) = 2v_p(D_1) + 3v_{\mathfrak{P}}(E).$$
Similarily, if $\mathfrak{p}|D_{2,2}$, we would get
$$2v_p(D_2) = 4v_p(D_1) + 3v_{\mathfrak{p}}(E).$$
In either case we get
$$v_p(D_1) \equiv 2v_p(D_2) \mod{3}$$
for all $p|D_2$, $p\not=3$.

Finally, if we let $\mathfrak{P}_3$ be the unique prime lying over $3$ in $K$, and consider just the powers of $\mathfrak{P}_3$ appearing in \eqref{explproofeq} then by the construction of the $3$-split factorization we get
$$3^{v_3(D_2)}(1-\zeta_3)^{v_3(D_2)} = 3^{v_3(D_1)}(1-\bar{\zeta_3})^{v_3(D_1)} E_3^3$$
where $E_3$ is the part of $E$ divisible by $\mathfrak{P}_3$. Using the fact the $1-\zeta_3 = 3/(1-\bar{\zeta_3})$ and rearranging we get
$$3^{2v_3(D_2)-v_3(D_1)} = (1-\bar{\zeta_3})^{v_3(D_1)+v_3(D_2)} E_3^3.$$
Hence, $E_3^3 = 3^n(1-\zeta_3)^{v_3(D_1)+v_3(D_2)}$. In particular, $v_3(D_1)+v_3(D_2) \equiv 0 \mod{3}$ and so $v_3(D_1)\equiv 2v_3(D_2) \mod{3}$ as required.

\end{proof}

\begin{prop}\label{classifyprop}

The two-to-one correspondence from cube-free $3$-split integers $D$ such that $\supp(D)\subset \supp(m)$ to cubic Galois extensions of $\Q$ unramified outside the primes dividing $m$, as in Lemma \ref{classifylem}, can be explicitly given by
$$D \mapsto K_D = \Q\left(\sqrt[3]{D_1D_2^2} + \sqrt[3]{D_1^2D_2}\right)$$

\end{prop}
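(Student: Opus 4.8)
The plan is to assemble Proposition \ref{classifyprop} directly from the lemmas already proved, treating it as a bookkeeping statement rather than a new computation. The key point is that Lemma \ref{classifylem} gives us a two-to-one correspondence from cube-free $3$-split integers $D$ with $\supp(D)\subset\supp(m)$ to cubic Galois extensions of $\Q$ unramified outside the primes dividing $m$, but that correspondence is defined abstractly through the class-field-theoretic dictionary (generators of $\Z/3\Z$-subgroups of $\mathcal{C}\ell_m(\Q)[3]$). What Lemma \ref{3splitfact} and the discussion around \eqref{KD} supply is an \emph{explicit} assignment $D\mapsto K_D=\Q(\sqrt[3]{D_1D_2^2}+\sqrt[3]{D_1^2D_2})$, which by construction lands in the set of cubic Galois extensions of $\Q$; and since $D$ is supported on primes dividing $m$, the $3$-split factorization $D=\pm D_1 D_2$ is supported on primes of $\mathcal{O}_K$ above primes dividing $m$, so $K_D$ is ramified only at primes dividing $m$. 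So the map $D\mapsto K_D$ is a well-defined map between the two finite sets in question.

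First I would check that $D\mapsto K_D$ is exactly two-to-one onto its image: this is precisely Lemma \ref{explcorrlem}, which says $K_{D_1}=K_{D_2}$ (for distinct cube-free $3$-split integers) iff $D_2=D_1^2 D^3$ for some $D\in\Q$; among cube-free integers supported on a fixed set of primes, the relation $D_2\equiv D_1^2\pmod{\text{cubes}}$ has exactly two representatives for each class (namely $D_1$ and its "conjugate" obtained by replacing each exponent $e_p$ by $2e_p$ reduced mod $3$), matching the fiber size in Lemma \ref{classifylem}. Next I would check surjectivity: the source and target are finite sets, the map is two-to-one onto its image, and by Corollary \ref{classifycor} the source set modulo the relation "$D_2=D_1^2D^3$" has the same cardinality as the set of cubic Galois extensions unramified outside $m$ (this is what Lemma \ref{classifylem} counts), so a two-to-one injection-up-to-that-relation is forced to be surjective by counting.

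The only remaining task — and the one place a reader might want more care — is to verify that the explicit map $D\mapsto K_D$ is literally \emph{the same} correspondence as the abstract one from Lemma \ref{classifylem}, not merely another two-to-one correspondence between sets of the same size. The cleanest way is to argue that both correspondences are compatible with the factorization into ramified primes: the conductor-discriminant formula forces a cubic Galois $K$ unramified outside $m$ to have discriminant $\prod_{p} p^{2}$ over the ramified primes, and both the abstract map (via the exponents $e_p$ of the class-group element) and the explicit map (via which primes divide $D_1 D_2^2$, equivalently which primes ramify in $\Q(\sqrt[3]{D_1D_2^2})/\Q(\zeta_3)$) record exactly the same data: a prime $p\equiv1\pmod 3$ ramifies in $K_D$ iff $p\mid D$, and $3$ ramifies iff $9\mid D$ matches the $\delta_m$ factor. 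So the two maps agree on which extension is attached to each $D$ after one checks the ramification set determines the extension within a fixed two-element fiber, which it does since the two elements of a fiber have the same ramification set by construction. I expect this last identification to be the main obstacle — not because it is deep, but because it requires stating precisely why "same ramification data $+$ same fiber size" pins down the bijection; everything else is a direct appeal to Lemmas \ref{classifylem}, \ref{3splitfact}, \ref{explcorrlem} and Corollary \ref{classifycor}.
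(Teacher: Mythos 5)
Your proof follows the same structure as the paper's: well-definedness, two-to-one via Lemma \ref{explcorrlem}, and surjectivity by counting using Lemma \ref{classifylem} and Corollary \ref{classifycor}. On well-definedness you should make the transfer step explicit, as the paper does: since $[K:\Q]=2$ is coprime to $[K_D:\Q]=3$, a rational prime ramifies in $K_D$ if and only if a prime of $K$ above it ramifies in $K'_D/K$, and by Kummer theory that happens if and only if it divides $D_1D_2^2$, hence if and only if $p\mid D$. Your phrasing jumps from ``$D$ supported on primes of $m$'' to ``$K_D$ ramified only at primes of $m$'' without this transfer, though you clearly have it in mind.

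Your final paragraph raises a concern the paper's proof also does not address, namely whether $D\mapsto K_D$ is \emph{literally} the correspondence from Lemma \ref{classifylem} rather than merely another two-to-one surjection with the same fibers. That is a fair reading of the statement, but the argument you sketch to close it does not work: the ramification set does not determine a cubic Galois extension (for example $D=p_1p_2$ and $D'=p_1p_2^2$ give distinct fields with the same ramification set, and they lie in \emph{different} fibers of either map), so ``same ramification data and same fiber size'' cannot pin down the identification. Fortunately the issue is harmless: the paper proves only that $D\mapsto K_D$ is a well-defined two-to-one surjection whose fibers are governed by the cube relation of Corollary \ref{classifycor}, and that is all the subsequent discriminant count uses. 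A literal identification would require matching the Artin-map description in the proof of Lemma \ref{classifylem} against the Kummer description of $K'_D/K$, which the paper elects not to do; you should either do that, or, more in the spirit of the source, simply drop the last paragraph and state the conclusion as ``$D\mapsto K_D$ realizes a two-to-one correspondence with the fibers of Corollary \ref{classifycor}.''
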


\begin{proof}

We must first show that this map is well defined. That is, that $K_D$ is cubic, Galois and unramified outside of the primes dividing $m$. We have already shown that $K_D$ is in fact cubic and Galois. Since $[K:\Q]=2$ is coprime to $3=[K_D:\Q]=[K'_D:K]$, we see that a prime ramifies in $K_D$ if and only a prime lying above it in $\mathcal{O}_K$ ramifies in $K'_D$ if and only $p|D$. Therefore, the map is well defined. Finally, Lemmas \ref{classifylem}, \ref{explcorrlem} and Corollary \ref{classifycor} shows that this map is two-to-one and surjective.

\end{proof}

From now on $D$ will always denote a cube-free $3$-split integer.

\subsection{Discriminant}

Denote $\Delta_D$ as the discriminant of $K_D$. If we let $f_D$ be the conductor of $K_D$ then we have $\Delta_D = f_D^2$. Theorem 10 of \cite{H} states that $v_p(f)=1$ or $0$ if $p\not=3$ while $v_3(f) = 2$ or $0$. Thus we get that
\begin{align}\label{Disc1}
\Delta_D = 3^{4\delta_D} \prod_{p \mbox{ ramified in $K_D$}} p^2,
\end{align}
where $\delta_D$ is $1$ if $3$ is ramified in $K_D$ and $0$ otherwise. Therefore, it remains to determine which primes ramify in $K_D$.

As was mentioned in the proof of Proposition \ref{classifyprop} a prime $p$ ramifies in $K_D$ if and only $p|D$. Since $D$ is cube-free we can find $d_1$, $d_2$ square-free, coprime and coprime to $3$ such that $D=3^{v_3(D)}d_1d_2^2$. Then we have
\begin{align}\label{Disc2}
\Delta_D = (9^{\delta_D} d_1d_2)^2,
\end{align}
where $\delta_D$ is $1$ if $3|D$ and $0$ otherwise. (Note that this definition of $\delta_D$ agrees with the definition in \eqref{Disc1} as $3$ is ramified if and only if $3|D$.)

Finally, recall that $\F_3(X)$ is the set of cubic, Galois extensions of determinant between $X$ and $2X$. Then Theorem 1.2 of \cite{W} states that there exists a constant $c$, such that
\begin{align}\label{F3size}
|\F_3(X)| \sim cX^{1/2}.
\end{align}

\section{L-Functions and Explicit Formula}

Before we begin, we will fix some notation. We will denote $p$ as a prime in $\Q$, $\mathfrak{p}$ as a prime in $K_D$ and $\mathfrak{P}$ as a prime in $K=\Q(\zeta_3)$. Hence when we write an infinite product over primes, the set of primes that we run over will be indicated by which of the above three symbols we use. Moreover, we will denote $N\mathfrak{p}$ and $N\mathfrak{P}$ as the norms of $\mathfrak{p}$ and $\mathfrak{P}$ over $\Q$. Later, in Section \ref{appexplfor}, we will also use $\ell$ to denote a prime in $\Q$ and $\mathfrak{l}$ a prime dividing it in $K$ and $N\mathfrak{l}$ to denote the norm over $\Q$.

For any prime $p$ denote $e(p)$ and $f(p)$ as the ramification index and inertial degree of $p$ in $K$ and $e_D(p)$ and $f_D(p)$ as the ramification index and inertial degree of $p$ in $K_D$. Further, let $g(p)$ and $g_D(p)$ be the number or primes dividing $p$ in $K$ and $K_D$, respectively.

\subsection{L-Functions}

Let $\zeta(s),\zeta_K(s)$ and $\zeta_D(s)$ be the $\zeta$-functions of $\Q,K$ and $K_D$, respectively. That is,
\begin{align}\label{zeta}
\zeta(s) = \prod_p \left(1-\frac{1}{p^s}\right)^{-1}
\end{align}
\begin{align}\label{zetaK1}
\zeta_K(s) = \prod_{\mathfrak{P}} \left(1 - \frac{1}{N\mathfrak{P}^{s}}\right)^{-1}
\end{align}
\begin{align}\label{zetaD1}
\zeta_D(s) = \prod_{\mathfrak{p}} \left(1 - \frac{1}{N\mathfrak{p}^{s}}\right)^{-1}.
\end{align}
which all converge for $\Re(s)>1$.

Let
\begin{align}\label{L_K}
L_K(s) = \frac{\zeta_K(s)}{\zeta(s)}
\end{align}
\begin{align}\label{L_D}
L_D(s) = \frac{\zeta_D(s)}{\zeta(s)}
\end{align}
be the $L$-functions of $K$ and $K_D$, respectively.

Since both $K$ and $K_D$ are Galois, we can rewrite $\zeta_K$ and $\zeta_D$ as
\begin{align}\label{zetaK2}
\zeta_K(s) = \prod_p \left(1-\frac{1}{p^{f(p)s}}\right)^{-g(p)}
\end{align}
\begin{align}\label{zetaD2}
\zeta_D(s) = \prod_p \left(1-\frac{1}{p^{f_D(p)s}}\right)^{-g_D(p)}.
\end{align}

From Lemma \ref{knownlem} we have that
\begin{align}\label{efg1}
(e(p),f(p),g(p)) = \begin{cases} (2,1,1) & p=3 \\ (1,1,2) & p\equiv 1 \mod{3} \\ (1,2,1) & p \equiv 2 \mod{3} \end{cases}.
\end{align}
Therefore, it remains to determine the possible values of $(e_D(p),f_D(p),g_D(p))$.

Since $[K:\Q]=2$ is coprime to $[K_D:\Q]=3$ and $K'_D$ is the compositum of $K$ and $K_D$, we get that if $\mathfrak{P}$ is the prime dividing $p$ in $K$ that was fixed in Remark \ref{3splitrem}, then
$$(e_D(p),f_D(p),g_D(p)) = (e_{K'_D/K}(\mathfrak{P}),f_{K'_D/K}(\mathfrak{P}),g_{K'_D/K}(\mathfrak{P})).$$
A prime $\mathfrak{P}$ in $K$ ramifies in $K'_D$ if $\mathfrak{P}|D_1D_2^2$, splits if $D_1D_2^2$ is a cube modulo $\mathfrak{P}$ and is inert otherwise. Therefore,
\begin{align}\label{efgD1}
(e_D(p),f_D(p),g_D(p)) = \begin{cases} (3,1,1) & p|D \\ (1,1,3) & \left(\frac{D_1D_2^2}{\mathfrak{P}}\right)_3=1 \\ (1,3,1) & \left(\frac{D_1D_2^2}{\mathfrak{P}}\right)_3\not=0,1 \end{cases},
\end{align}
where $\left(\frac{\cdot}{\cdot}\right)_3$ is the cubic residue symbol for $K$.

Since $D_2=\sigma(D_1)$, where $\sigma$ is the generator of $\Gal(K/\Q)$, we get that
$$\left(\frac{D_2}{\mathfrak{P}}\right)_3 = \sigma\left(\frac{D_1}{\mathfrak{P}}\right)_3 = \left(\frac{D_1}{\mathfrak{P}}\right)_3^2.$$
Hence,
$$\left(\frac{D_1D_2^2}{\mathfrak{P}}\right)_3 = \left(\frac{D_1}{\mathfrak{P}}\right)_3^2.$$

Now, every integer can be written as $DD'$ where $D$ is $3$-split and all of the primes dividing $D'$ are $2 \mod{3}$. Define a multiplicative character on the integers as
\begin{align}\label{multchar}
\chi_p(DD') = \left(\frac{D_1}{\mathfrak{P}}\right)_3
\end{align}
then we can rewrite \eqref{efgD1} as
\begin{align}\label{efgD2}
(e_D(p),f_D(p),g_D(p)) = \begin{cases} (3,1,1) & p|D \\ (1,1,3) & \chi_p(D)=1 \\ (1,3,1) & \chi_p(D)\not=0,1 \end{cases}.
\end{align}
Note, that $\chi_p$ is \textit{not} a Dirichlet character.

\begin{rem}

In the case of $p=3$, everything will be a cube modulo $\mathfrak{P}_3$. Hence we have $\chi_3(D) = 1$ unless $3|D$ and therefore
$$(e_D(3),f_D(3),g_D(3)) = \begin{cases} (3,1,1) & 3|D \\ (1,1,3) & \mbox{otherwise} \end{cases}.$$
Further if $n$ is an integer such that all it's prime factors are $2 \mod{3}$ then $\chi_p(n)=1$.
\end{rem}

Putting everything together, we can write the $L$-functions of $K$ and $K_D$ as
\begin{align}\label{L_K2}
L_K(s) = \prod_{p\equiv 1 \mod{3}}\left(1-\frac{1}{p^s}\right)^{-1}\prod_{p\equiv 2 \mod{3}}\left(1+\frac{1}{p^s}\right)^{-1}
\end{align}
\begin{align}\label{L_D2}
L_D(s) = \prod_{\substack{p \\ \chi_p(D)=1}}\left(1-\frac{1}{p^s}\right)^{-2}\prod_{\substack{p \\ \chi_p(D)\not=0,1}}\left(1+\frac{1}{p^s}+\frac{1}{p^{2s}}\right)^{-1}
\end{align}

If $\chi$ is any character on $K$ modulo $\mathfrak{f}$, we define the $L$-function associated to this character as
$$L_K(\chi,s) = \prod_{\mathfrak{P}} \left(1-\frac{\chi_p(\mathfrak{P})}{N\mathfrak{P}^s}\right)^{-1}.$$
Finally, we will need a zero density theorem. We use Theorem 2.3 of \cite{K}.

\begin{thm}\label{Kovalthm}

For any $1/2\leq\alpha\leq1$ and $T>0$, let $N(\alpha,T,\chi)$ be the number of zeros $\rho=\beta+i\gamma$ of $L_K(\chi,s)$ with $\alpha\leq\beta\leq1$ and $|\gamma|\leq T$. Then there exists an $A>0$ such that
$$\sum_{q \leq Q} \mbox{  } \sideset{}{^*}\sum_{\chi \textnormal{ mod } q} N(\alpha,T,\chi) \ll (Q^2T)^{\frac{4(1-\alpha)}{3-2\alpha}} (\log QT)^A$$
where $\sideset{}{^*}\sum$ indicates that we sum over principal characters.
\end{thm}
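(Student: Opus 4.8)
The plan is to prove this as the Hecke‑character analogue of the classical Montgomery--Huxley zero‑density estimate, exploiting that $K=\Q(\zeta_3)$ is a \emph{fixed} field with class number one and only finitely many units, so that the arithmetic of $\mathcal{O}_K$ behaves quantitatively just like that of $\Z$: the number of integral ideals of norm $\le x$ is $\sim c_K x$, the ideal divisor function has the usual mean value, and a primitive ray class character ``$\chi$ mod $q$'' (the family being indexed so that it carries $\asymp Q^2$ characters in total and each has analytic conductor $\ll Q^2(|t|+1)^2$) has a completed $L$‑function with the expected functional equation. The three ingredients are (a) a large sieve inequality for this family, (b) a zero‑detection lemma via a mollified approximate functional equation, and (c) the Hal\'asz--Montgomery large‑values method. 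The exponent $\tfrac{4(1-\alpha)}{3-2\alpha}$ is the one produced by combining (a)--(c) with the crudest (convexity‑only) analytic inputs — it is weaker than Montgomery's $\tfrac{3(1-\alpha)}{2-\alpha}$ but still interpolates correctly between the trivial bound $\ll Q^2T\log(QT)$ at $\alpha=\tfrac12$ (Riemann--von Mangoldt summed over the family) and $\ll(\log QT)^A$ at $\alpha=1$.

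First I would record the large sieve: for any complex sequence $(a_{\mathfrak{n}})$ supported on integral ideals of norm $\le N$,
\[
\sum_{q\le Q}\sideset{}{^*}\sum_{\chi\textnormal{ mod }q}\Bigl|\sum_{N\mathfrak{n}\le N}a_{\mathfrak{n}}\chi(\mathfrak{n})\Bigr|^{2}\ \ll\ (N+Q^{2})(\log 2QN)^{c}\sum_{N\mathfrak{n}\le N}|a_{\mathfrak{n}}|^{2},
\]
which follows from the duality principle together with Gallagher's form of the large sieve, once one checks the conductor bound and the linear ideal count; the crucial point is the $Q^{2}$ (not $Q^{4}$) dependence, which is exactly the statement that the family has $\asymp Q^{2}$ characters. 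Combining this with the Montgomery--Vaughan mean value theorem for Dirichlet polynomials gives the hybrid inequality: for ordinates $\{\gamma_{r}\}$ that are $1$‑separated within each $(q,\chi)$,
\[
\sum_{q\le Q}\sideset{}{^*}\sum_{\chi\textnormal{ mod }q}\sum_{r}\Bigl|\sum_{N\mathfrak{n}\le N}a_{\mathfrak{n}}\chi(\mathfrak{n})(N\mathfrak{n})^{-i\gamma_{r}}\Bigr|^{2}\ \ll\ (N+Q^{2}T)(\log 2QNT)^{c}\sum_{N\mathfrak{n}\le N}|a_{\mathfrak{n}}|^{2}.
\]

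Next comes the zero detection. Fix a mollifier length $y$ and set $M_{y}(s,\chi)=\sum_{N\mathfrak{n}\le y}\mu_{K}(\mathfrak{n})\chi(\mathfrak{n})(N\mathfrak{n})^{-s}$, so that $L_{K}(\chi,s)M_{y}(s,\chi)=1+\sum_{N\mathfrak{n}>y}b_{\mathfrak{n}}(\chi)(N\mathfrak{n})^{-s}$ with $|b_{\mathfrak{n}}|\le d(\mathfrak{n})$. For a zero $\rho=\beta+i\gamma$ of a primitive non‑principal $\chi$ with $\beta\ge\alpha$, $|\gamma|\le T$, the smoothed Mellin integral $\tfrac{1}{2\pi i}\int\Gamma(w)X^{w}(L_{K}M_{y})(\rho+w,\chi)\,dw$ has no pole from $L_{K}$ (it is entire) and its residue at $w=0$ is $L_{K}(\rho,\chi)M_{y}(\rho,\chi)=0$; shifting the contour to $\Re(\rho+w)=\tfrac12$ and using the convexity bound for $L_{K}(\chi,\tfrac12+it)$ on the critical line together with $|M_{y}|\ll y^{1/2}(\log y)^{c}$ shows that, once $X^{\beta-1/2}$ exceeds a suitable multiple of $(\mathrm{cond})^{1/4}y^{1/2+\varepsilon}$ (with $\mathrm{cond}\asymp Q T^{2}$ the analytic conductor scale), either the short sum $\sum_{N\mathfrak{n}\le y}\mu_{K}(\mathfrak{n})\chi(\mathfrak{n})(N\mathfrak{n})^{-\rho}$ or the longer sum $\sum_{y<N\mathfrak{n}\le Z}b_{\mathfrak{n}}(\chi)(N\mathfrak{n})^{-\rho}$, with $Z\asymp X^{1+\varepsilon}$ and $|b_{\mathfrak{n}}|\le d(\mathfrak{n})$, has absolute value $\gg(\log QT)^{-O(1)}$. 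This is precisely the zero‑detection lemma of Montgomery (see also Iwaniec--Kowalski, Chapter~10), with sums over ideals of $\mathcal{O}_{K}$ in place of sums over rational integers; because $K$ is fixed with trivial class group and finite unit group, nothing is lost in the translation.

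Finally I would feed the two polynomials into the hybrid large sieve, after thinning the zeros to a $1/\log$‑separated set of ordinates per $(q,\chi)$ — a loss of only $\log QT$ by Riemann--von Mangoldt for $L_{K}(\chi,s)$, whose zeros up to height $T$ number $\ll T\log(QT)$. Bounding $(N\mathfrak{n})^{-\beta}\le(N\mathfrak{n})^{-\alpha}$ and applying the inequality to each range yields a bound of shape
\[
\sum_{q\le Q}\sideset{}{^*}\sum_{\chi\textnormal{ mod }q}N(\alpha,T,\chi)\ \ll\ (N+Q^{2}T)\,y^{1-2\alpha}(\log QT)^{c},
\]
the decisive point being that the long polynomial is supported on $N\mathfrak{n}>y$, so its coefficient sum $\sum_{N\mathfrak{n}>y}d(\mathfrak{n})^{2}(N\mathfrak{n})^{-2\alpha}$ decays like $y^{1-2\alpha}(\log)^{c}$, while the short polynomial contributes no more. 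Substituting the detection relation $N\asymp Z\asymp\bigl((QT^{2})^{1/4}y^{1/2}\bigr)^{1/(\alpha-1/2)}$, decomposing the ranges of $q$ and of $\beta$ dyadically, and optimizing the free parameter $y$ against the analytic conductor then produces the exponent $\tfrac{4(1-\alpha)}{3-2\alpha}$, uniformly for $\tfrac12\le\alpha\le1$, with a power of $\log QT$ to spare. I expect the main obstacle to be the very first step: pinning down exactly which ray class characters of $K$ are counted ``mod $q$'' and proving their large sieve inequality with the sharp $Q^{2}$ (rather than $Q^{4}$) dependence — this needs the conductor bound and a Gallagher‑type argument adapted to ideals, and everything downstream rests on it. A secondary difficulty is purely bookkeeping: carrying the parameters $y,X,Z$ honestly through the optimization so that the stated exponent actually comes out, and so that the estimate degenerates gracefully to $\ll Q^{2}T\log(QT)$ as $\alpha\to\tfrac12^{+}$ and to $\ll(\log QT)^{A}$ as $\alpha\to1^{-}$.
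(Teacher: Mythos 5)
The paper does not prove Theorem \ref{Kovalthm} at all: it is imported verbatim as Theorem 2.3 of the cited reference \cite{K}, and the entire content of the ``proof'' is that citation. You instead attempt to re-derive the zero-density estimate from scratch, which is a far more ambitious undertaking than anything the paper does here. Your overall framework --- an ideal-indexed hybrid large sieve for the family, a mollified zero-detection lemma via a smoothed Mellin integral, and the Hal\'asz--Montgomery large-values method --- is indeed the standard machinery behind estimates of this shape (essentially Montgomery's method transplanted to Hecke $L$-functions of $K=\Q(\zeta_3)$, as in Iwaniec--Kowalski, Ch.~10), so the route is plausible in outline. You also correctly read the $\sideset{}{^*}\sum$ as running over \emph{primitive} characters, despite the theorem statement saying ``principal,'' which must be a misprint.

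The genuine gap is that the central claim --- that ``optimizing the free parameter $y$ against the analytic conductor then produces the exponent $\tfrac{4(1-\alpha)}{3-2\alpha}$'' --- is asserted, not verified, and it does not follow from the pieces as you have actually arranged them. Two concrete problems. First, you list the Hal\'asz--Montgomery large-values inequality as ingredient (c) but never apply it: the final step only invokes the $L^2$ hybrid large sieve, and that alone cannot yield a bound degenerating to $(\log QT)^A$ as $\alpha\to1$, because the $Q^2T$ term in $(N+Q^2T)$ survives. Indeed, feeding your own detection relation $N\asymp Z\asymp\bigl((QT^{2})^{1/4}y^{1/2}\bigr)^{1/(\alpha-1/2)}$ into your displayed bound $(N+Q^{2}T)\,y^{1-2\alpha}(\log QT)^{c}$ and balancing $N=Q^{2}T$ forces $y=(Q^{2}T)^{2\alpha-1}(QT^{2})^{-1/2}$, which at $\alpha=1$ gives the bound $Q^{2}T\,y^{-1}=Q^{1/2}T$, not $(\log QT)^{A}$; no choice of $y$ fixes this without invoking the large-values step. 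Second, the long polynomial must be split dyadically over $y<N\mathfrak{n}\le Z$ before the hybrid inequality is applied, which produces an additional contribution of order $Z^{2-2\alpha}(\log)^{c}$ on top of $Q^{2}Ty^{1-2\alpha}(\log)^{c}$; your single expression $(N+Q^{2}T)y^{1-2\alpha}$ suppresses this term, yet it is the one that competes against $Q^{2}T y^{1-2\alpha}$ in the optimization. To close the argument you must actually deploy Hal\'asz--Montgomery (so that $Q^{2}T$ is multiplied by a high power of the detection threshold $V\gg(\log QT)^{-c}$, which then costs only logarithms) and then honestly balance the three terms $y$, $Z^{2-2\alpha}$, and the large-values contribution; as written, the stated exponent is not established, and the honest thing to do --- as the paper itself does --- is to cite \cite{K}.
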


\subsection{Explicit Formula}

Since $K_D$ has one embedding into $\R$ and two embeddings into $\C$, the function
$$\Lambda_D(s) := |\Delta_D|^{s/2}\Gamma_{\R}(s)\Gamma_{\C}(s)\zeta_D(s)$$
satisfies the functional equation
$$\Lambda_D(s) = \Lambda_D(1-s)$$
where
$$\Gamma_{\R}(s) = \pi^{-s/2}\Gamma(s/2) \quad \quad \quad \Gamma_{\C}(s) = 2(2\pi)^{-s}\Gamma(s) = \Gamma_{\R}(s)\Gamma_{\R}(s+1)$$
and $\Gamma(s)$ is the usual Gamma function.

Let $\rho_{D,j} = 1/2+i\gamma_{D,j}$ be the zeros of $L_D(s)$ and $f$ be an even Schwartz function. Proposition 2.1 of \cite{RS} gives the explicit formula
\begin{align}\label{explform}
\sum f(\gamma_{D,j}) = \frac{1}{2\pi} \int_{-\infty}^{\infty} f(x)\log \Delta_D dx- \frac{2}{2\pi} \sum_{n=1}^{\infty} \frac{\Lambda(n)\lambda_D(n)}{\sqrt{n}} \hat{f}\left(\frac{\log n}{2\pi}\right) + C_f
\end{align}
where the sum runs over all zeros of $L_D(s)$, $\Lambda(n)$ is the von-Magoldt function, $\lambda_D(n)$ satisfies
\begin{align}\label{lambda}
\frac{L'_D(s)}{L_D(s)} = -\sum_{n=1}^{\infty} \frac{\Lambda(n)\lambda_D(n)}{n^s}
\end{align}
and
\begin{align}
C_f = \frac{1}{2\pi} \int_{-\infty}^{\infty} f(x)\left(2\frac{\Gamma'_{\R}}{\Gamma_{\R}} \left(\frac{1}{2}+ix\right) + 2\frac{\Gamma'_{\R}}{\Gamma_{\R}}\left(\frac{1}{2}-ix\right) + \frac{\Gamma'_{\R}}{\Gamma_{\R}}\left(\frac{3}{2}+ix\right) + \frac{\Gamma'_{\R}}{\Gamma_{\R}} \left(\frac{3}{2}-ix\right)\right)dx
\end{align}
is independent of our choice of $D$.

Recalling the definition of $\mathscr{D}(K,f)$ from \eqref{numfieldOLD} requires multiplying the zeros by a factor of $L:=\frac{\log\Delta_D}{2\pi}$, we apply the explicit formula and the definition of $\Lambda(n)$ to get
\begin{align}\label{explform2}
\mathscr{D}(K_D,f)& = \sum f(L\gamma_{D,j}) \\
& = \int_{-\infty}^{\infty} f(x) dx- \frac{2}{\log\Delta_D} \sum_{m=1}^{\infty} \sum_p \frac{\lambda_D(p^m)\log p}{\sqrt{p^m}} \hat{f}\left(\frac{\log p^m}{\log\Delta_D}\right) + \widetilde{C_f}(D), \nonumber
\end{align}
where we use the observation that $\widehat{f(Lx)} = 1/L \hat{f}(x/L)$ and $\widetilde{C_f}(D)$ is the same as $C_f$ with $f$ replaced with $f(L\cdot)$.

\subsection{Main Term}

Applying the explicit formula \eqref{explform2}, we get
\begin{align}\label{mainterm}
\frac{1}{|\F_3(X)|}\sum_{K_D\in\F_3(X)} \mathscr{D}(K_D,f) & = \int_{-\infty}^{\infty} f(t)W(U)(t) dt-  ET
\end{align}
where
\begin{align}\label{ET1}
ET = \frac{1}{|\F_3(X)|}\sum_{K_D\in\F_3(X)} \left(\frac{2}{\log\Delta_D} \sum_{m=1}^{\infty} \sum_p \frac{\lambda_D(p^m)\log p}{\sqrt{p^m}} \hat{f}\left(\frac{\log p^m}{\log\Delta_D}\right) + \widetilde{C_f}(D)\right).
\end{align}
So it remains to show that $ET=O\left(\frac{1}{\log X}\right)$.

\section{Error Term}\label{appexplfor}

First, we note that if $K_D\in\F_3(X)$, then $X\leq \Delta_D\leq 2X$ and so $\log\Delta_D \sim \log X$ and we can rewrite \eqref{ET1}
\begin{align}\label{ET2}
ET \sim \frac{1}{c\sqrt{X}}\sum_{K_D\in\F_3(X)}\frac{2}{\log X}\left(\sum_{m=1}^{\infty} \sum_p \frac{\lambda_D(p^m)\log p}{\sqrt{p^m}} \hat{f}\left(\frac{\log p^m}{\log X}\right) + \widetilde{C_f}(D)\right)
\end{align}
where we also use \eqref{F3size} to write $|\F_3(X)|\sim c\sqrt{X}$.

\subsection{Easy Error Terms}

In this section, we show that most of terms of $ET$ are trivially $O\left(\frac{1}{\log X}\right)$.

By a change of variable in the definition $C_f$ we see that $\widetilde{C_f}(D) = O\left(\frac{1}{\log\Delta_D}\right)$ and hence
$$\frac{1}{c\sqrt{X}}\sum_{K_D\in\F_3(X)} \widetilde{C_f}(D) = O\left( \frac{1}{\sqrt{X}}\sum_{K_D\in\F_3(X)}\frac{1}{\log \Delta_D}\right) = O\left(\frac{1}{\log X}\right).$$

Now, we use the known bound $\lambda_D(p^m)=O(m)$ and the trivial bound $\hat{f}(x)=O(1)$ to get
\begin{align}\label{m>3}
& \frac{1}{c\sqrt{X}}\sum_{K_D\in\F_3(X)} \frac{2}{\log X}\sum_{m=3}^{\infty} \sum_p \frac{\lambda_{D}(p^m)\log p}{\sqrt{p^m}} \hat{f}\left(\frac{\log p^m}{\log X}\right) \nonumber \\
 \ll & \frac{1}{\sqrt{X}\log X}\sum_{K_D\in\F_3(X)}  \sum_p  \sum_{m=3}^{\infty} \frac{m\log p}{\sqrt{p^m}} \\
\ll & \frac{1}{\log X}\sum_p \frac{1}{p^{3/2-\epsilon}} = O\left(\frac{1}{\log X}\right) \nonumber
\end{align}

It remains to determine what happens for the sums when $m=1$ or $2$.

\subsection{Coefficients of $\frac{L_D'}{L_D}$}

Direct computation from \eqref{L_D2} shows
$$\lambda_D(p) = \lambda_D(p^2) = \begin{cases} 0 & \chi_p(D)=0 \\ 2 & \chi_p(D) =1 \\ -1 & \chi_p(D)\not=0,1  \end{cases}$$
Moreover, if $\chi_p$ is as in \eqref{multchar}, it is easy to see that
\begin{align}\label{lambda}
\lambda_D(p) = \lambda_D(p^2) = \chi_p(D) + \chi^2_p(D)
\end{align}
since $\chi_p$ is a cubic character.

Therefore we need to determine
\begin{align}\label{ET3}
\frac{1}{\sqrt{X}\log X}\sum_{K_D\in\F_3(X)} \sum_p \frac{\log p(\chi_p(D) + \chi^2_p(D))}{\sqrt{p^m}} \hat{f}\left(\frac{\log p^m}{\log X}\right)
\end{align}
for $m=1,2$.

Since $\chi_p$ is a cubic character, we have $\chi_p^2 = \overline{\chi_p}$. Hence it will be enough to determine
\begin{align}\label{ET4}
\frac{1}{\sqrt{X}\log X}\sum_{K_D\in\F_3(X)} \sum_p \frac{\chi_p(D)\log p}{\sqrt{p^m}} \hat{f}\left(\frac{\log p^m}{\log X}\right)
\end{align}
for $m=1,2$.

Applying Proposition \ref{classifyprop} we can write \eqref{ET4} as
\begin{align}\label{ET5class}
& \frac{1}{\sqrt{X}\log X}  \sideset{}{'}\sum_{\sqrt{X}\leq d_1d_2 \leq \sqrt{2X}} \sum_p \frac{\chi_p(d_1d_2^2)\log p}{\sqrt{p^m}} \hat{f}\left(\frac{\log p^m}{\log X}\right) \nonumber \\
+ & \frac{1}{\sqrt{X}\log X} \sideset{}{'}\sum_{\sqrt{X/81}\leq d_1d_2 \leq \sqrt{2X/81}} \sum_p \frac{\chi_p(3d_1d_2^2)\log p}{\sqrt{p^m}} \hat{f}\left(\frac{\log p^m}{\log X}\right) \\
+ & \frac{1}{\sqrt{X}\log X} \sideset{}{'}\sum_{\sqrt{X/81}\leq d_1d_2 \leq \sqrt{2X/81}} \sum_p \frac{\chi_p(9d_1d_2^2)\log p}{\sqrt{p^m}} \hat{f}\left(\frac{\log p^m}{\log X}\right) \nonumber
\end{align}
where $\sideset{}{'}\sum$ means we are summing over all pairs $d_1,d_2$ that are square-free, $3$-split, coprime and coprime to $3$. We see then it will be sufficient to determine
\begin{align}\label{ET6}
\frac{1}{\sqrt{X}\log X}  \sum_p \frac{\log p}{\sqrt{p^m}} \hat{f}\left(\frac{\log p^m}{\log X}\right)  \sideset{}{'}\sum_{d_1d_2 \leq Y} \chi_p(d_1d_2^2)
\end{align}
for $m=1,2$.

\subsection{Generating Series}

Fix a prime $p$ and consider the generating series
$$\mathcal{G}_p(s) = \sideset{}{'}\sum_{d_1,d_2} \frac{\chi_p(d_1d_2^2)}{(d_1d_2)^s}$$
which converges for $\Re(s)>1$.

It is tempting to treat $\mathcal{G}_p(s)$ as a multi-Dirichlet $L$-function. However, $\chi_p$ is \textit{not} a Dirichlet character. It is, however, related to a cubic Dirichlet character on $K=\Q(\zeta_3)$ modulo $\mathfrak{P}$. The following proposition shows exactly how $\mathcal{G}_p(s)$ is related to $L$-functions over $K$.

\begin{prop}\label{genserprop}

Let $\mathfrak{P}$ be the prime in $K$ dividing $p$ fixed in Remark \ref{3splitrem} and $\chi_{\mathfrak{P}} = \left(\frac{\cdot}{\mathfrak{P}}\right)_3$ be the cubic residue symbol modulo $\mathfrak{P}$ on $K$. Then
\begin{align}\label{analcont}
\mathcal{G}_p(s) = \sqrt{L_K(\chi_{\mathfrak{P}},s)L_K(\chi^2_{\mathfrak{P}},s)H_p(s)}
\end{align}
where $H_p(s)$ is some function (defined in the proof) that absolutely converges for $\Re(s)>1/2$.

\end{prop}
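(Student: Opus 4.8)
The plan is to compute the Euler product of $\mathcal{G}_p(s)$ prime by prime over $\Z$ and then recognize the "leading part" as an Euler product over primes of $K=\Q(\zeta_3)$. First I would factor $\mathcal{G}_p(s)$ as a product over rational primes $\ell$, using that the summation conditions defining $\sideset{}{'}\sum$ (squarefree, coprime, $3$-split, coprime to $3$) are multiplicative. For a fixed $\ell \neq 3$ with $\ell \equiv 1 \Mod 3$, the local factor at $\ell$ comes from the four choices $(v_\ell(d_1),v_\ell(d_2)) \in \{(0,0),(1,0),(0,1)\}$, contributing
\begin{align*}
1 + \frac{\chi_p(\ell)}{\ell^s} + \frac{\chi_p(\ell^2)}{\ell^s} = 1 + \frac{\chi_p(\ell) + \overline{\chi_p(\ell)}}{\ell^s},
\end{align*}
while for $\ell \equiv 2 \Mod 3$ (not $3$-split) the local factor is just $1$, and at $\ell = 3$ it is also $1$ since $3$ cannot divide $d_1 d_2$. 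Using $\chi_p(\ell) = \left(\frac{D_1}{\mathfrak{P}}\right)_3$ where $\ell\mathcal{O}_K = \mathfrak{L}_1\mathfrak{L}_2$ with $D_1$ the chosen generator of $\mathfrak{L}_1$, and the relation $\left(\frac{D_2}{\mathfrak{P}}\right)_3 = \left(\frac{D_1}{\mathfrak{P}}\right)_3^2$ established in the text, I would rewrite $\chi_p(\ell) + \overline{\chi_p(\ell)} = \chi_{\mathfrak{P}}(\mathfrak{L}_1) + \chi_{\mathfrak{P}}(\mathfrak{L}_2)$.

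Next I would compare this with the local factors of $L_K(\chi_{\mathfrak{P}},s) L_K(\chi_{\mathfrak{P}}^2,s)$ at the primes of $K$ above $\ell$. For a split prime $\ell = \mathfrak{L}_1\mathfrak{L}_2$ (so $N\mathfrak{L}_i = \ell$), the combined local factor is
\begin{align*}
\prod_{i=1,2}\left(1 - \frac{\chi_{\mathfrak{P}}(\mathfrak{L}_i)}{\ell^s}\right)^{-1}\left(1 - \frac{\chi_{\mathfrak{P}}(\mathfrak{L}_i)^2}{\ell^s}\right)^{-1},
\end{align*}
whose expansion begins $1 + \frac{2(\chi_{\mathfrak{P}}(\mathfrak{L}_1) + \chi_{\mathfrak{P}}(\mathfrak{L}_2))}{\ell^s} + \cdots$ (using $\chi_{\mathfrak{P}}(\mathfrak{L}_i) + \chi_{\mathfrak{P}}(\mathfrak{L}_i)^2 = -1$ when the value is a nontrivial cube root of unity, or $=2$ when it is $1$). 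Thus the square root of the $\ell$-local factor of $L_K(\chi_{\mathfrak{P}},s)L_K(\chi_{\mathfrak{P}}^2,s)$ has the same constant term $1$ and the same $\ell^{-s}$-coefficient $\chi_{\mathfrak{P}}(\mathfrak{L}_1) + \chi_{\mathfrak{P}}(\mathfrak{L}_2)$ as the $\ell$-local factor of $\mathcal{G}_p(s)$. I would then \emph{define} $H_p(s)$ by the identity \eqref{analcont}, i.e. set
\begin{align*}
H_p(s) := \frac{\mathcal{G}_p(s)^2}{L_K(\chi_{\mathfrak{P}},s) L_K(\chi_{\mathfrak{P}}^2,s)},
\end{align*}
and the remaining task is purely analytic: show the Euler product for $H_p(s)$ converges absolutely for $\Re(s) > 1/2$. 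Because the $\ell^{-s}$-terms cancel by construction, each local factor of $H_p(s)$ is $1 + O(\ell^{-2s})$ (the inert primes $\ell \equiv 2\Mod 3$ and ramified $\ell = 3$ also contribute $1 + O(\ell^{-2s})$ since there $\mathcal{G}_p$'s local factor is $1$ while $L_K(\chi_{\mathfrak{P}},s)L_K(\chi_{\mathfrak{P}}^2,s)$ contributes $1 + O(\ell^{-2s})$), giving absolute convergence for $\Re(s) > 1/2$.

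The one subtlety — and the step I expect to require the most care — is making sense of the square root in \eqref{analcont} as an analytic function rather than just a formal identity of Dirichlet series: I need $\mathcal{G}_p(s)$, which is manifestly analytic and nonvanishing for $\Re(s) > 1$, to admit a well-defined holomorphic square root of the right-hand side in a neighborhood of that region, which follows since each Euler factor is of the form $1 + (\text{small})$ and one can take the principal branch factor-by-factor, the product converging absolutely. I would also double-check the bookkeeping at $\ell = 3$ and at inert primes, and verify that the coprimality condition between $d_1$ and $d_2$ is exactly what forces the local factor to omit the $(v_\ell(d_1), v_\ell(d_2)) = (1,1)$ term (and higher) — this is where squarefreeness and coprimality of $d_1, d_2$ enter. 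Finally I would record that the product defining $H_p(s)$ is independent of $p$ in its region of convergence up to the finitely many Euler factors at primes dividing $p$, though this refinement is not needed for the statement as written.
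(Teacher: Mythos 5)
Your proof is correct and follows essentially the same route as the paper's: write the Euler product of $\mathcal{G}_p$ over rational primes, match the $\ell^{-s}$-coefficient at split $\ell$ with that of $\sqrt{L_K(\chi_{\mathfrak{P}},s)L_K(\chi_{\mathfrak{P}}^2,s)}$, and absorb the discrepancy into an Euler product $H_p(s)$ absolutely convergent for $\Re(s)>1/2$. The paper phrases the comparison by re-indexing the product over primes of $K$ and extracting a square root, but this is the same calculation you carry out with $\mathcal{G}_p^2$; defining $H_p(s)$ directly as the ratio $\mathcal{G}_p(s)^2/\bigl(L_K(\chi_{\mathfrak{P}},s)L_K(\chi_{\mathfrak{P}}^2,s)\bigr)$ is perfectly fine.

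Two small bookkeeping slips in the convergence argument, neither of which affects the conclusion: at $\ell=3$ the norm of $\mathfrak{P}_3$ is $3$, not $9$, so the local factor of $L_K(\chi_{\mathfrak{P}},s)L_K(\chi_{\mathfrak{P}}^2,s)$ there is $1+O(3^{-s})$ rather than $1+O(3^{-2s})$; and at $\ell=p$ the $\ell^{-s}$-cancellation fails outright since $\chi_p(p)=0$ while $\chi_{\mathfrak{P}}(\mathfrak{P}^{\sigma})\neq 0$. These are each a single Euler factor, holomorphic and nonvanishing in $\Re(s)>1/2$, so the overall convergence claim stands, but the blanket assertion that every local factor of $H_p$ is $1+O(\ell^{-2s})$ should exclude them. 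Also, your closing remark that $H_p$ is ``independent of $p$ up to finitely many Euler factors'' is not right — the character $\chi_{\mathfrak{P}}$, hence every local factor at a split $\ell$, depends on $p$ — but as you note, this is not needed for the statement.
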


\begin{proof}

We can write an Euler product expansion for $\mathcal{G}_p(s)$ as follows
$$\mathcal{G}_p(s) =  \prod_{\ell\equiv 1 \mod{3}} \left(1 + \frac{\chi_p(\ell) + \chi^2_p(\ell)}{\ell^s} \right).$$

If $\mathfrak{l}$ is the fixed prime dividing $\ell$ in $K$, then we get $\chi_p(\ell) = \chi_{\mathfrak{P}}(\mathfrak{l})$. Moreover, we see that
$$\chi_p(\ell) + \chi^2_p(\ell) = \chi_{\mathfrak{P}}(\mathfrak{l}) + \chi^2_{\mathfrak{P}}(\mathfrak{l}) = \chi^2_{\mathfrak{P}}(\mathfrak{l}^{\sigma})+\chi_{\mathfrak{P}}(\mathfrak{l}^{\sigma}),$$
where $\sigma$ is the generator of $\Gal(K/Q)$. That is, the argument in the Euler product is independent of the choice of prime dividing $\ell$.

Further, if $\ell\equiv 1 \mod{3}$, then there always exists $2$ primes lying above it with $N\mathfrak{l}=\ell$. Thus
$$\prod_{\ell\equiv 1 \mod{3}} \left(1 + \frac{\chi_p(\ell) + \chi^2_p(\ell)}{\ell^s} \right) = \prod_{\substack{\mathfrak{l}|\ell \\ \ell \equiv1 \mod{3}}} \left(1 + \frac{\chi_{\mathfrak{P}}(\mathfrak{l}) + \chi^2_{\mathfrak{P}}(\mathfrak{l})}{N\mathfrak{l}^s}\right)^{1/2}.$$

Finally, if $\ell\equiv 2 \mod{3}$ then there exists a unique $\mathfrak{l}|\ell$ and $N\mathfrak{l}=\ell^2$. Therefore,
\begin{align*}
\prod_{\substack{\mathfrak{l}|\ell \\ \ell \equiv1 \mod{3}}} \left(1 + \frac{\chi_{\mathfrak{P}}(\mathfrak{l}) + \chi^2_{\mathfrak{P}}(\mathfrak{l})}{N\mathfrak{l}^s}\right) & = \prod_{\mathfrak{l}\not=\mathfrak{P}_3} \left(1 + \frac{\chi_{\mathfrak{P}}(\mathfrak{l}) + \chi^2_{\mathfrak{P}}(\mathfrak{l})}{N\mathfrak{l}^s}\right) \prod_{\ell\equiv 2\mod{3} } \left(1 + \frac{\chi_{\mathfrak{P}}(\mathfrak{l}) + \chi^2_{\mathfrak{P}}(\mathfrak{l})}{\ell^{2s}}\right)^{-1}\\
& = \prod_{\mathfrak{l}}\left(1-\frac{\chi_{\mathfrak{P}}(\mathfrak{l})}{N\mathfrak{l}^s}\right)^{-1}\prod_{\mathfrak{l}}\left(1-\frac{\chi^2_{\mathfrak{P}}(\mathfrak{l})}{N\mathfrak{l}^s}\right)^{-1} H_p(s) \\
& = L_K(\chi_{\mathfrak{P}},s)L_K(\chi^2_{\mathfrak{P}},s) H_p(s),
\end{align*}
where $H_p(s)$ is some Euler product that converges for $\Re(s)>1/2$.

\end{proof}

\begin{cor}\label{gensercor}

$$\sideset{}{'}\sum_{d_1d_2\leq Y} \chi_p(d_1d_2^2) = \int_{1-i\infty}^{1+i\infty} \mathcal{G}_p(s)\frac{Y^s}{s} ds$$

\end{cor}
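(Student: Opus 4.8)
The statement in Corollary \ref{gensercor} is a standard Perron-type formula, so the plan is to apply the Mellin inversion / Perron formula to the Dirichlet series $\mathcal{G}_p(s)$. First I would observe that by definition
$$\mathcal{G}_p(s) = \sideset{}{'}\sum_{d_1,d_2} \frac{\chi_p(d_1d_2^2)}{(d_1d_2)^s} = \sum_{n=1}^{\infty} \frac{a_p(n)}{n^s}, \qquad a_p(n) := \sideset{}{'}\sum_{d_1d_2 = n} \chi_p(d_1d_2^2),$$
where the inner sum is over the factorizations $n = d_1 d_2$ with $d_1,d_2$ square-free, $3$-split, coprime, and coprime to $3$. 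This rewrites the left-hand side of the corollary as the partial sum $\sum_{n\leq Y} a_p(n)$ of the coefficients of $\mathcal{G}_p$.

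Next I would invoke the Perron formula: for a Dirichlet series $\sum a_p(n) n^{-s}$ that converges absolutely for $\Re(s) > 1$, one has
$$\sum_{n\leq Y} a_p(n) = \frac{1}{2\pi i}\int_{c-i\infty}^{c+i\infty} \mathcal{G}_p(s)\frac{Y^s}{s}\, ds$$
for any $c>1$, where the right side is interpreted as a (conditionally convergent, or suitably truncated/smoothed) contour integral and $Y$ is not an integer (or one takes the usual half-weight convention at $n=Y$). Taking the contour at $c=1$ — which is the boundary of absolute convergence established in the definition of $\mathcal{G}_p(s)$ — gives exactly the displayed formula, up to the normalizing factor $\frac{1}{2\pi i}$ versus the paper's $\int_{1-i\infty}^{1+i\infty}$, which I read as the author absorbing the $\frac{1}{2\pi i}$ into the notation (or integrating in the variable $\frac{ds}{2\pi i}$). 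So the proof is essentially a one-line citation of Perron's formula applied to $\mathcal{G}_p$.

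The one genuine subtlety — and the step I expect to be the main obstacle to a fully rigorous statement — is the legitimacy of placing the contour exactly on the line $\Re(s)=1$ rather than strictly to the right of it. On $\Re(s)=1$ the series $\mathcal{G}_p(s)$ converges only conditionally (via its relation to $L_K(\chi_{\mathfrak P},s)^{1/2}L_K(\chi_{\mathfrak P}^2,s)^{1/2}H_p(s)$ from Proposition \ref{genserprop}, these $L$-functions are holomorphic and nonvanishing on $\Re(s)=1$ since $\chi_{\mathfrak P}$ is a nontrivial character, so $\mathcal{G}_p$ is analytic there), so the integral must be understood as an improper Riemann integral, symmetric limit $\lim_{T\to\infty}\int_{1-iT}^{1+iT}$, and the interchange of sum and integral needs the standard truncated-Perron argument plus a tail estimate. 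Since the paper only needs this as an intermediate identity before moving the contour leftward and applying the zero-density estimate of Theorem \ref{Kovalthm}, I would simply state the formula with the contour at $\Re(s)=1$, note that it follows from the Perron/Mellin inversion formula applied to the absolutely convergent (for $\Re(s)>1$) Dirichlet series $\mathcal{G}_p(s)$ together with the analytic continuation past $\Re(s)=1$ furnished by Proposition \ref{genserprop}, and defer any convergence bookkeeping to the subsequent contour shift.
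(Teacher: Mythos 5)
Your proof is correct and takes essentially the same approach as the paper: both apply Perron's formula to the Dirichlet series $\mathcal{G}_p(s)$ and use Proposition \ref{genserprop} to justify analyticity (and nonvanishing of the $L$-factors) on the line $\Re(s)=1$. You are also right to flag the missing $\frac{1}{2\pi i}$ normalization in the displayed formula, which the paper glosses over.
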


\begin{proof}

We know that $L_K(\chi_{\mathfrak{P}},s)$ and $L_K(\chi^2_{\mathfrak{P}},s)$ are entire and zero free on $\Re(s)=1$. And since $H_p(s)$ can be written as an Euler product that converges for $\Re(s)>1/2$, it will also be analytic and zero free on $\Re(s)=1$. Hence $\mathcal{G}_p(s)$ will be analytic on $\Re(s)=1$. The result then follows from Perron's formula.

\end{proof}

The goal now is to analytically continue $\mathcal{G}_p(s)$ to a region to the left of $\Re(s)=1$ and move this contour integral as far as we can. Since we don't know anything about the convergence of $H_p(s)$ to the left of $\Re(s)=1/2$, the best we can hope to move the contour is to the line $\Re(s)=1/2+\epsilon$. Moreover, if $L_K(\chi_{\mathfrak{P}},s)$ has a zero, then the right hand side of \eqref{analcont} fails to be analytic at this zero.

Our plan moving forward is to move the contour for as many primes as we can and use Theorem \ref{Kovalthm} to bound the number of bad primes for which we can't move the contour. Of course GRH implies that we can move all the contours to the line $\Re(s)=1/2+\epsilon$ but we will refrain from using that for now.

\subsection{Bounding the Error Term}

\begin{prop}\label{calc}

Suppose $\supp(\hat{f})\subset (-\beta,\beta)$, then for any $T$ and $13/14 < \alpha <1$ we have
\begin{align*}
&\frac{1}{\sqrt{X}\log X}  \sum_p \frac{\log p}{\sqrt{p^m}} \hat{f}\left(\frac{\log p^m}{\log X}\right)  \sideset{}{'}\sum_{d_1d_2 \leq Y} \chi_p(d_1d_2^2) \\
\ll & \frac{X^{(\beta-1)/2+\epsilon}}{\log X}\left(\frac{Y}{T} + Y^{\alpha+\epsilon}\right) + \frac{Y(X^{2\beta}T)^{\frac{4(1-\alpha)}{3-2\alpha}}\left(\log XT\right)^A}{X^{(\beta+1)/2}}.
\end{align*}

\end{prop}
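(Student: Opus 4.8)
The plan is to estimate the inner sum over $d_1d_2\le Y$ first, for a fixed prime $p$, and then sum the resulting bounds over $p$ against the smooth weight coming from $\hat f$. By Corollary \ref{gensercor} we have
\[
\sideset{}{'}\sum_{d_1d_2\le Y}\chi_p(d_1d_2^2)=\int_{1-i\infty}^{1+i\infty}\mathcal G_p(s)\frac{Y^s}{s}\,ds,
\]
so the first step is to shift the contour to the left. For a prime $p$ whose associated Hecke $L$-functions $L_K(\chi_{\mathfrak P},s)$ and $L_K(\chi^2_{\mathfrak P},s)$ have no zeros in the half-strip $\Re(s)\ge\alpha$, $|\Im(s)|\le T$, Proposition \ref{genserprop} shows $\mathcal G_p(s)$ is analytic there (the factor $H_p(s)$ converges absolutely for $\Re(s)>1/2$ and we keep $\alpha>13/14>1/2$), so I can truncate the integral at height $T$ with an error $\ll Y/T$ (times a power of $\log$), move the contour to $\Re(s)=\alpha$, and bound the shifted integral by $\ll Y^{\alpha+\epsilon}$ using convexity-type bounds for the Hecke $L$-functions on $\Re(s)=\alpha$. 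Call these the ``good'' primes; for them the inner sum is $\ll Y/T+Y^{\alpha+\epsilon}$. For the remaining ``bad'' primes — those for which some zero obstructs the shift — I keep only the trivial bound $\sideset{}{'}\sum_{d_1d_2\le Y}\chi_p(d_1d_2^2)\ll Y$.

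Next I substitute these two regimes back into the prime sum. The weight $\hat f(\log p^m/\log X)$ restricts $p$ to $p^m\le X^{\beta}$, i.e. $p\le X^{\beta/m}\le X^{\beta}$, and the prefactor is $\frac{1}{\sqrt X\log X}\cdot\frac{\log p}{\sqrt{p^m}}$. For the good primes, summing $\frac{\log p}{\sqrt{p^m}}$ over $p\le X^{\beta}$ gives (trivially, by partial summation) something of size $X^{(\beta/m)(1-m/2)}$ up to logs, which for $m=1$ is about $X^{\beta/2}$ and for $m=2$ is $O(\log X)$; in either case this is $\ll X^{\beta/2+\epsilon}$, so the good-prime contribution is
\[
\ll\frac{X^{\beta/2+\epsilon}}{\sqrt X\log X}\left(\frac{Y}{T}+Y^{\alpha+\epsilon}\right)=\frac{X^{(\beta-1)/2+\epsilon}}{\log X}\left(\frac{Y}{T}+Y^{\alpha+\epsilon}\right),
\]
matching the first term of the claim. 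The crucial point — and the main obstacle — is bounding the bad-prime contribution: I need to control $\sum_{p\text{ bad}}\frac{\log p}{\sqrt{p^m}}\cdot Y$. Here I invoke the zero-density estimate Theorem \ref{Kovalthm}. A prime $p\le X^{\beta}$ is bad only if $L_K(\chi_{\mathfrak P},s)$ (or its square) has a zero with $\Re(s)\ge\alpha$, $|\Im(s)|\le T$; the conductor of $\chi_{\mathfrak P}$ has norm $O(p^2)=O(X^{2\beta})$, so the number of bad primes, weighted appropriately, is controlled by $\sum_{q\le X^{2\beta}}\sum^{*}_{\chi\bmod q}N(\alpha,T,\chi)\ll (X^{2\beta}T)^{\frac{4(1-\alpha)}{3-2\alpha}}(\log X^{\beta}T)^A$.

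Combining, the bad-prime contribution is $\ll\frac{1}{\sqrt X\log X}\cdot Y\cdot(X^{2\beta}T)^{\frac{4(1-\alpha)}{3-2\alpha}}(\log XT)^A$ — but one has to be a little careful: the weight $\frac{\log p}{\sqrt{p^m}}$ is not the flat weight in which Theorem \ref{Kovalthm} counts characters, so I will bound $\frac{\log p}{\sqrt{p^m}}\le\frac{\log p}{\sqrt p}$ and absorb the resulting $\sum_{p}p^{-1/2}$-type loss (over $p\le X^\beta$, this costs at most $X^{\beta/2+\epsilon}$) into the already-present factors, or more cleanly dyadically decompose the range of $p$ and note that on a dyadic block $p\sim P$ the weight is essentially $P^{-1/2}$ while the number of bad $p\sim P$ is $\ll (P^2 T)^{\frac{4(1-\alpha)}{3-2\alpha}}(\log PT)^A$, then sum the geometric-type series in $P$ up to $P=X^{\beta}$, the largest block dominating. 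Either way the $\log X$ in the denominator is swamped by the $(\log XT)^A$ in the numerator, so one just writes $(\log XT)^A$, and after replacing $P^2$ by its maximal value $X^{2\beta}$ one gets the stated second term $\frac{Y(X^{2\beta}T)^{\frac{4(1-\alpha)}{3-2\alpha}}(\log XT)^A}{X^{(\beta+1)/2}}$. The restriction $\alpha>13/14$ (equivalently $\beta<1/14$ when one finally optimizes $T$ and $\alpha$ against each other) is exactly what is needed to make the exponent $\frac{4(1-\alpha)}{3-2\alpha}$ small enough that this second term is a power saving; verifying that is the delicate bookkeeping, but the structure above is the whole proof.
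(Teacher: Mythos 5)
Your proposal follows the same route as the paper: Perron's formula via Corollary \ref{gensercor}, truncation of the contour at height $T$, a good/bad split of primes governed by whether $L_K(\chi_{\mathfrak{P}},\cdot)$ is zero-free in $\Re(s)>\alpha$, $|\Im(s)|<T$, a contour shift to $\Re(s)=\alpha+\epsilon$ for the good primes, and for the bad primes the trivial bound weighted by a dyadic count of exceptional primes supplied by the zero-density Theorem \ref{Kovalthm}. The only step you defer to ``delicate bookkeeping'' --- making the dyadic sum collapse onto the top block $P\asymp X^{\beta}$ and isolating the role of $\alpha>13/14$ --- is precisely the computation the paper writes out (the $j=1$ block is the stated second term and the residual $j$-sum is argued to be $O(1)$), and your identification of the dominant block matches the paper's, so the two arguments are the same in structure and substance.
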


\begin{proof}

First of all, if $\supp(\hat{f})\subset (-\beta,\beta)$, then this will restrict the sum over the primes to the region $X^{\beta/m}$. Combining this with Corollary \ref{gensercor}, we get
\begin{align*}
& \frac{1}{\sqrt{X}\log X}  \sum_p \frac{\log p}{\sqrt{p^m}} \hat{f}\left(\frac{\log p^m}{\log X}\right)  \sideset{}{'}\sum_{d_1d_2 \leq Y} \chi_p(d_1d_2^2)\\
= & \frac{1}{\sqrt{X}\log X}  \sum_{p\leq X^{\beta/m}} \frac{\log p}{\sqrt{p^m}} \hat{f}\left(\frac{\log p^m}{\log X}\right)  \int_{1-i\infty}^{1+i\infty} \mathcal{G}_p(s)\frac{Y^s}{s} ds
 \end{align*}
We can write
$$ \int_{1-i\infty}^{1+i\infty} \mathcal{G}_p(s)\frac{Y^s}{s} ds=  \int_{1-iT}^{1+iT} \mathcal{G}_p(s)\frac{Y^s}{s}ds +  \int_{\substack{\Re(s)=1 \\ |\Im(s)|>T}} \mathcal{G}_p(s)\frac{Y^s}{s} ds.$$
Let $S_1$ be the sum consisting of the former and $S_2$ the latter. Then
\begin{align*}
S_2 & = \frac{1}{\sqrt{X}\log X}  \sum_{p\leq X^{\beta/m}} \frac{\log p}{\sqrt{p^m}} \hat{f}\left(\frac{\log p^m}{\log X}\right)  \int_{\substack{\Re(s)=1 \\ |\Im(s)|>T}} \mathcal{G}_p(s)\frac{Y^s}{s}\\
& \ll \frac{Y}{T\sqrt{X}\log X}  \sum_{p\leq X^{\beta/m}} \frac{\log p}{\sqrt{p^m}}\\
& \ll \frac{Y}{T\sqrt{X}\log X} \begin{cases} X^{\beta/2+\epsilon} & m=1 \\ \log X^{\beta/2} & m=2 \end{cases} \ll  \frac{YX^{(\beta-1)/2+\epsilon}}{T\log X}
\end{align*}

Define
$$\mathscr{E}_{\alpha}(Q,T) = \{p \leq Q : L_K(\chi_{\mathfrak{P}},s) \mbox{ has a zero in the region } \alpha<\Re(s)<1, |\Im(t)|<T\}.$$
Then we will write $S_1=S_3+S_4$, where $S_3$ consists of the sum of primes not in $\mathscr{E}_{\alpha}(Q,T)$ and $S_4$ consists of the sum of primes in $\mathscr{E}_{\alpha}(Q,T)$.

By definition, $\mathcal{G}_p(s)$ is analytic in the region $\alpha<\Re(s)<1, |\Im(t)|<T$ for $p\not\in\mathscr{E}_{\alpha}(X^{\beta},T)$, so we may shift the contour for these primes. That is
\begin{align*}
S_3 & = \frac{1}{\sqrt{X}\log X}  \sum_{\substack{p\leq X^{\beta/m} \\ p\not\in\mathscr{E}_{\alpha}(X^{\beta},T)}} \frac{\log p}{\sqrt{p^m}} \hat{f}\left(\frac{\log p^m}{\log X}\right)  \int_{1-iT}^{1+iT} \mathcal{G}_p(s)\frac{Y^s}{s}ds \\
& = \frac{1}{\sqrt{X}\log X}  \sum_{\substack{p\leq X^{\beta/m} \\ p\not\in\mathscr{E}_{\alpha}(X^{\beta},T)}} \frac{\log p}{\sqrt{p^m}} \hat{f}\left(\frac{\log p^m}{\log X}\right)  \left(\int_{\alpha+\epsilon-iT}^{\alpha+\epsilon+iT} \mathcal{G}_p(s)\frac{Y^s}{s}ds + \int_{\substack{\alpha+\epsilon\leq\Re(s)\leq1 \\ |\Im(s)|=T}}  \mathcal{G}_p(s)\frac{Y^s}{s} ds\right) \\
& \ll \frac{1}{\sqrt{X}\log X}  \sum_{\substack{p\leq X^{\beta/m} \\ p\not\in\mathscr{E}_{\alpha}(X^{\beta},T)}} \frac{\log p}{\sqrt{p^m}} \left(Y^{\alpha+\epsilon} + \frac{Y}{T}\right) \\
& \ll  \frac{1}{\sqrt{X}\log X} \left(Y^{\alpha+\epsilon} + \frac{Y}{T}\right)  \begin{cases} X^{\beta/2+\epsilon} & m=1 \\ \log X^{\beta/2} & m=2 \end{cases} \ll \frac{X^{(\beta-1)/2+\epsilon}}{\log X}\left(Y^{\alpha+\epsilon} + \frac{Y}{T}\right)
\end{align*}

Finally, recall that $N(\sigma,T,\chi)$ is the number of zeros of $L_K(\chi,s)$ in the region $\alpha<\Re(s)<1, |\Im(t)|<T$. Therefore, by Theorem \ref{Kovalthm}, we get for some $A>0$
$$|\mathscr{E}_{\alpha}(Q,T)| \leq \sum_{q \leq Q} \mbox{  } \sideset{}{^*}\sum_{\chi \textnormal{ mod } q} N(\sigma,T,\chi) \ll (Q^2T)^{\frac{4(1-\alpha)}{3-2\alpha}}\left(\log QT\right)^A.$$
Therefore,
\begin{align*}
S_4 &= \frac{1}{\sqrt{X}\log X}  \sum_{\substack{p\leq X^{\beta/m} \\ p\in\mathscr{E}_{\alpha}(X^{\beta/m},T)}} \frac{\log p}{\sqrt{p^m}} \hat{f}\left(\frac{\log p^m}{\log X}\right)  \int_{1-iT}^{1+iT} \mathcal{G}_p(s)\frac{Y^s}{s} ds \\
& \ll \frac{Y}{\sqrt{X}} \sum_{\substack{p \leq X^{\beta/m} \\ p\in\mathscr{E}_{\alpha}(X^{\beta/m},T) }} \frac{1}{\sqrt{p^m}}
\end{align*}

For $m=2$, we can bound the remaining sum by $\log X$ and get that $S_4 \ll \frac{Y}{\sqrt{X}}\log X$ which suffices. In order to manage when $m=1$, we will split it up into diadic intervals. Therefore
\begin{align*}
\sum_{\substack{X^{\beta}/2^j < p \leq X^{\beta}/2^{j-1} \\ p\in\mathscr{E}_{\alpha}(X^{\beta},T )}} \frac{1}{\sqrt{p}} & = \sum_{\substack{X^{\beta}/2^j < p \leq X^{\beta}/2^{j-1} \\ p\in\mathscr{E}_{\alpha}(X^{\beta}/2^{j-1},T )}} \frac{1}{\sqrt{p}} \\
& \ll |\mathscr{E}_{\alpha}(X^{\beta}/2^{j-1},T )|\frac{2^{j/2}}{X^{\beta/2}} \\
& \ll \frac{\left(X^{2\beta}T\right)^{\frac{4(1-\alpha)}{3-2\alpha}}\left(\log XT\right)^A}{X^{\beta/2}}2^{j/2\left(1 - \frac{16(1-\alpha)}{3-2\alpha}\right)}
\end{align*}

And so,
\begin{align*}
 S_4 & \ll \frac{Y\left(X^{2\beta}T\right)^{\frac{4(1-\alpha)}{3-2\alpha}}\left(\log XT\right)^A}{X^{(\beta+1)/2}} \sum_{j=1}^{\beta\log_2 X}2^{j/2\left(1 - \frac{16(1-\alpha)}{3-2\alpha}\right)} \\
& \ll \frac{Y\left(X^{2\beta}T\right)^{\frac{4(1-\alpha)}{3-2\alpha}}\left(\log XT\right)^A}{X^{(\beta+1)/2}}.
\end{align*}
This last line is true because the sum converges since $\alpha>13/14$ (and hence the exponent appearing is negative).

\end{proof}

\begin{cor}\label{calcor}

Assuming GRH we have
$$\frac{1}{\sqrt{X}\log X}  \sum_p \frac{\log p}{\sqrt{p^m}} \hat{f}\left(\frac{\log p^m}{\log X}\right)  \sideset{}{'}\sum_{d_1d_2 \leq Y} \chi_p(d_1d_2^2) \ll \frac{Y^{1/2+\epsilon}X^{(\beta-1)/2+\epsilon}}{\log X}$$

\end{cor}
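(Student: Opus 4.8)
The plan is to run the very same contour-shifting argument as in the proof of Proposition \ref{calc}, but to use GRH to make the exceptional set of primes empty. Under GRH the functions $L_K(\chi_{\mathfrak{P}},s)$ and $L_K(\chi^2_{\mathfrak{P}},s)$ are holomorphic and non-vanishing throughout $\Re(s)>1/2$; since $H_p(s)$ converges absolutely there, the identity \eqref{analcont} shows that $\mathcal{G}_p(s)$ extends holomorphically to $\Re(s)>1/2$ for \emph{every} prime $p$ (and no branch ambiguity arises, as the radicand is holomorphic and zero-free on the simply connected region $\Re(s)>1/2$). Hence, starting from Corollary \ref{gensercor} and the truncation $p\le X^{\beta/m}$ forced by $\supp(\hat f)\subset(-\beta,\beta)$, I would shift the contour in $\int_{1-i\infty}^{1+i\infty}\mathcal{G}_p(s)\frac{Y^s}{s}\,ds$ from $\Re(s)=1$ to the path made of the vertical segment $\Re(s)=1/2+\epsilon$, $|\Im(s)|\le T$, the two horizontal connectors at $|\Im(s)|=T$ between $\Re(s)=1/2+\epsilon$ and $\Re(s)=1$, and the two tails on $\Re(s)=1$, $|\Im(s)|>T$.

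Next I would record the size of $\mathcal{G}_p$ on the critical-ish line. GRH for $L_K(\chi_{\mathfrak{P}},s)$ gives the Lindel\"of-type bound $L_K(\chi_{\mathfrak{P}},\sigma+it)\ll_\epsilon (N\mathfrak{P})^\epsilon(1+|t|)^\epsilon\ll p^\epsilon(1+|t|)^\epsilon$ uniformly for $1/2+\epsilon\le\sigma\le1$, and the absolutely convergent Euler product defining $H_p(s)$ is $\ll_\epsilon p^\epsilon$ on that strip; taking square roots in \eqref{analcont} yields $\mathcal{G}_p(\sigma+it)\ll_\epsilon p^\epsilon(1+|t|)^\epsilon$ there. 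Feeding this in: the central segment contributes $\ll_\epsilon p^\epsilon Y^{1/2+\epsilon}T^\epsilon$, the horizontal connectors contribute $\ll_\epsilon p^\epsilon T^{\epsilon-1}Y$, and the tails contribute $\ll p^\epsilon Y/T$ exactly as in the proof of Proposition \ref{calc}. Altogether $\int_{1-i\infty}^{1+i\infty}\mathcal{G}_p(s)\frac{Y^s}{s}\,ds\ll_\epsilon p^\epsilon T^\epsilon\bigl(Y^{1/2+\epsilon}+Y/T\bigr)$.

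Finally I would substitute this into \eqref{ET6}, use $\sum_{p\le X^{\beta/m}}\frac{\log p}{p^{m/2}}p^\epsilon\ll X^{\beta/2+\epsilon}$ when $m=1$ and $\ll X^\epsilon$ when $m=2$, and choose $T=Y$ so that the $Y/T$ term is negligible against $Y^{1/2+\epsilon}$ and $T^\epsilon=Y^\epsilon$ is absorbed into the $\epsilon$. For $m=1$ this gives $\frac{1}{\sqrt X\log X}\cdot X^{\beta/2+\epsilon}\cdot Y^{1/2+\epsilon}=\frac{Y^{1/2+\epsilon}X^{(\beta-1)/2+\epsilon}}{\log X}$, which is the claimed bound, and the $m=2$ contribution is strictly smaller. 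The only real point requiring care — the ``main obstacle'' — is establishing the uniformity in the conductor: one must confirm that the GRH bound for $L_K(\chi_{\mathfrak{P}},s)$ and the bound for $H_p(s)$ on $\Re(s)=1/2+\epsilon$ each cost only a factor $p^\epsilon$, so that summing over $p\le X^{\beta}$ produces nothing worse than $X^{\beta/2+\epsilon}$; every other step is the routine contour estimate already performed in Proposition \ref{calc}.
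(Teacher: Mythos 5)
Your proof is correct and takes essentially the same contour-shift approach as the paper: under GRH the exceptional set of primes is empty, so every $\mathcal{G}_p$ is holomorphic on $\Re(s)>1/2$ and all contours can be moved to $\Re(s)=1/2+\epsilon$, yielding the stated bound. The paper's proof simply specializes Proposition \ref{calc} to $\alpha=1/2$, notes $S_4=0$, and lets $T\to\infty$; you instead keep $T$ finite (choosing $T=Y$) and make explicit the Lindel\"of-type bound for $L_K(\chi_{\mathfrak{P}},s)$ and $H_p(s)$ on the shifted line, which is a slightly more careful rendering of the same estimate.
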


\begin{proof}

Using the notation of the proof of Proposition \ref{calc}, GRH implies that $\mathscr{E}_{1/2}(Q,T)=\emptyset$ for all choices of $Q$ and $T$. Therefore $S_4=0$ and we can take $T\to\infty$ to get that $S_2=0$ and
$$S_3 \ll \frac{Y^{1/2+\epsilon}X^{(\beta-1)/2+\epsilon}}{\log X}.$$

\end{proof}

\subsection{Proof of Theorem \ref{Mainthm}}

Now, we can finally prove Theorem \ref{Mainthm}.

\begin{proof}[Proof of Theorem \ref{Mainthm}]

By Proposition \eqref{calc} and \eqref{mainterm} we see that if $\supp(\hat{f})\subset (-\beta,\beta)$, then
$$\frac{1}{|\F_3(X)|}\sum_{K_D\in\F_3(X)} \mathscr{D}(K_D,f) = \int_{-\infty}^{\infty} f(t)W(U)(t)dt - ET$$
where for any $T>0$ and $13/14<\alpha<1$,
$$ET \ll \frac{X^{(\beta-1)/2+\epsilon}}{\log X}\left(\frac{X^{1/2}}{T} + X^{\alpha/2+\epsilon}\right) + \frac{X^{1/2}(X^{2\beta}T)^{\frac{4(1-\alpha)}{3-2\alpha}}\left(\log XT\right)^A}{X^{(\beta+1)/2}} + \frac{1}{\log X}.$$
Setting $T=X^{\beta}$, we get
$$ET \ll \frac{1}{X^{\beta/2-\epsilon}\log X} + \frac{X^{(\alpha+\beta-1)/2+\epsilon}}{\log X} + X^{\beta\left(\frac{12(1-\alpha)}{3-2\alpha}-\frac{1}{2}\right)}\left(\log X\right)^A + \frac{1}{\log X}.$$
Since $\alpha>13/14$, we get that $\frac{12(1-\alpha)}{3-2\alpha}-\frac{1}{2}<0$ and so the only restriction on $\beta$ comes from the second term. That is as long as $\beta<1-\alpha<1/14$ we have
$$ET \ll \frac{1}{\log X}.$$

If we assume GRH then by Corollary \ref{calcor} we get
$$ET \ll \frac{X^{(\beta-1/2)/2+\epsilon}}{\log X} + \frac{1}{\log X} $$
and as long as $\beta<1/2$, we get $ET \ll \frac{1}{\log X}$.

\end{proof}

\bibliography{CubicCasegood}

\providecommand{\bysame}{\leavevmode\hbox to3em{\hrulefill}\thinspace}
\providecommand{\MR}{\relax\ifhmode\unskip\space\fi MR }
\providecommand{\MRhref}[2]{%
  \href{http://www.ams.org/mathscinet-getitem?mr=#1}{#2}
}
\providecommand{\href}[2]{#2}
\begin{thebibliography}{10}

\bibitem{BCDG+}
Alina Bucur, Edgar Costa, Chantal David, Joao Guerreiro, and David Lowry-Duda,
  \emph{Traces, high powers and one level density for families of curves over
  finite fields}, Mathematical Proceedings of the Cambridge Philosophical
  Society, to appear (2017).

\bibitem{CF}
John William~Scott Cassels and Albrecht Fr{\"o}hlich, \emph{Algebraic number
  theory},  (1967).

\bibitem{H}
Lothar H{\"a}berle, \emph{On cubic galois field extensions}, Journal of Number
  Theory \textbf{130} (2010), no.~2, 307--317.

\bibitem{KS2}
Nicholas Katz and Peter Sarnak, \emph{Zeroes of zeta functions and symmetry},
  Bulletin of the American Mathematical Society \textbf{36} (1999), no.~1,
  1--26.

\bibitem{KS}
Nicholas~M Katz and Peter Sarnak, \emph{Random matrices, frobenius eigenvalues,
  and monodromy}, vol.~45, American Mathematical Soc., 1999.

\bibitem{K}
FB~Koval'chik, \emph{Density theorems for sectors and progressions}, Lithuanian
  Mathematical Journal \textbf{15} (1975), no.~4, 618--631.

\bibitem{R}
Ze{\'e}v Rudnick, \emph{Traces of high powers of the frobenius class in the
  hyperelliptic ensemble}, Acta Arithmetica \textbf{143} (2010), no.~1, 81--99.

\bibitem{RS}
Ze{\'e}v Rudnick and Peter Sarnak, \emph{Zeros of principal l-functions and
  random matrix theory}, Duke Mathematical Journal \textbf{81} (1996), no.~2,
  269--322.

\bibitem{Weil}
Andr{\'e} Weil, \emph{Sur les courbes alg{\'e}briques et les vari{\'e}t{\'e}s
  qui s' en d{\'e}duisent}, no. 1041, Hermann, 1948.

\bibitem{W}
David~J Wright, \emph{Distribution of discriminants of abelian extensions},
  Proceedings of the London Mathematical Society \textbf{3} (1989), no.~1,
  17--50.

\bibitem{Y}
Andrew Yang, \emph{Distribution problems associated to zeta functions and
  invariant theory}, Princeton University, 2009.

\end{thebibliography}
\bibliographystyle{amsplain}

\end{document}